\newcommand{%
    \def\svgwidth{\columnwidth}
    \import{./figures/}{.pdf_tex}
}[1]{%
    \def\svgwidth{\columnwidth}
    \import{./figures/}{#1.pdf_tex}
}
\newtheorem{theorem}{Theorem}[section]
\newtheorem{lemma}[theorem]{Lemma}
\newtheorem{proposition}[theorem]{Proposition}
\newtheorem{definition}[theorem]{Definition}
\newtheorem{corollary}[theorem]{Corollary}
\newtheorem{prop}{Proposition}[section]
\newtheorem{remark}[prop]{Remark}
\makeatletter \@addtoreset{equation}{section} \makeatother
\def\*#1{\mathbf{#1}}
\def\ddt{\frac{d}{dt}}
\def\He{\mathrm {Hess}\,}
\def\con{{\bm{\eta}}}
\subjclass[2000]{Primary 53A05; Secondary 53A10, 53C40, 53C42}
\date{\today}
\begin{document}

\title{Stationary Surfaces with Boundaries}
\author{Anthony Gruber$^1$, Magdalena Toda$^2$, Hung Tran$^2$}
\address{$^1$\,Department of Mathematics and Statistics, Texas Tech University-Costa Rica, San Jose, 10203, Costa Rica}
\address{$^2$\,Department of Mathematics and Statistics, Texas Tech University, Lubbock, TX 79409, USA}


\begin{abstract} 
This article investigates stationary surfaces with boundaries, which arise as the critical points of functionals dependent on curvature. Precisely, a generalized ``bending energy'' functional $\mathcal{W}$ is considered which involves a Lagrangian that is symmetric in the principal curvatures. The first variation of $\mathcal{W}$ is computed, and a stress tensor is extracted whose divergence quantifies deviation from $\mathcal{W}$-criticality. Boundary-value problems are then examined, and a characterization of free-boundary $\mathcal{W}$-surfaces with rotational symmetry is given for scaling-invariant $\mathcal{W}$-functionals.  In case the functional is not scaling-invariant,  certain boundary-to-interior consequences are discussed.  Finally, some applications to the conformal Willmore energy and the p-Willmore energy of surfaces are presented.

\vspace{1pc}

{\bf Keywords:} curvature functionals, Willmore energy, free boundary problems, surfaces with boundary, minimal surfaces



\end{abstract}

\maketitle
\tableofcontents

\section{Introduction}
Surfaces with boundaries are fascinating objects which are ubiquitous across mathematics and the natural sciences.  Indeed, many examples of minimal and Willmore surfaces (among others) now serve as idealized models for physically-observable quantities such as surfactant films, lipid membranes, and material interfaces.  Since a large number of relevant surfaces with boundary arise as the minimizers of an energy functional, it is becoming more and more useful to investigate the behavior of these functionals so as to better understand their critical surfaces.

Many significant results reflecting this idea can already be found in the current literature.  In \cite{nitsche1993}, variational problems for surfaces with boundary are studied which involve functionals quadratic in the principal curvatures, and some existence results are proven.  Additionally, \cite{dall2011,dall2012,palmer2000,bergner2010,elliott2017,bernard2018,dalio2020,eichmann2019,eichmann2016,eichmann2018,grunau2021,novaga2020,pozetta2021,schatzle2010} investigate questions of existence and regularity related to boundary-value problems involving the conformally-invariant Willmore functional.  Moreover, related problems involving curvature-dependent energy functionals for surfaces with boundaries have been studied from the perspective of mathematical physics.  In \cite{tu2004,capovilla2002,agrawal2008}, such functionals are used to investigate the elastic properties of lipid membranes, while related functionals are used in \cite{santosa2003} for the analysis and development of lens design. 


Despite the work done so far, much is still unknown regarding the behavior of functionals which depend on surface curvature, especially when the integrand is no longer a quadratic function of the principal curvatures.  To address this, we consider a generalized model, originally proposed by Sophie Germain \cite{germain1831}, for the bending energy of a thin plate.  In particular, if $\*r: \Sigma \to \mathbb{R}^3$ is an isometric immersion of the oriented surface $\Sigma$ with unit normal $\*n:\Sigma\to S^2$ into Euclidean 3-space, the functional of interest will be given as the integral of a symmetric polynomial function in the principal curvatures, which (by a classical theorem of Newton) may be alternatively expressed as (see e.g. \cite{macdonald1998})
\begin{equation}\label{eq:Wfunc}
    \mathcal{W}(\*r) \coloneqq \int_{\Sigma} \tilde{F}(\kappa_1,\kappa_2)\, d\mu = \int_{\Sigma} F(H,K)\,d\mu.
\end{equation}
Here, $H = \kappa_1 + \kappa_2$ and $K = \kappa_1\kappa_2$ are the mean and Gauss curvatures of the surface, respectively, and $d\mu$ is the area element on $\Sigma$ induced by the immersion $\*r$.  Additionally, we allow the possibility of general smooth functions which are symmetric in $\kappa_1$ and $\kappa_2$. 
\begin{remark}
Note that our convention for $H = \kappa_1 + \kappa_2$ is twice the arithmetic mean of the principal curvatures.
\end{remark}

\begin{remark} 
Note that $\mathcal{W}$ reduces to the area functional when $F=1$ and the conformal Willmore functional when $F=H^2-4K$.  Moreover, other functionals of higher-order have been proposed on physical grounds (see \cite{nitsche1993} and references therein), which are also amenable to this formulation.
\end{remark} 

In this article, we study smooth, oriented, compact surfaces with (potentially empty) boundaries. Our motivation is framed by a general question in the calculus of variations, eloquently phrased by B. Palmer in \cite{palmer2000}, which asks whether or not the interior solution to a variational problem necessarily inherits the symmetries of its boundary.  This is natural to consider, as the importance of symmetry in variational problems has been widely-recognized due to a classical theorem of Noether (circa 1918) in \cite{noether1918}.  In particular, Noether's Theorem establishes a valuable correspondence between the symmetries of a Lagrangian (integrand) and the quantities that are conserved under its perturbation.  Among other things, this correspondence encourages the search for divergence-free tensor expressions, often called conservation laws, which encode significant information about the variational problem at hand.  As has been seen in the literature, such expressions can be exceedingly useful in weakening the regularity requirements necessary to prove results (e.g. \cite{riviere2007,bernard2016}).  Additionally, conservation laws have also been used to obtain regularity results in geometrically relevant cases (see e.g. \cite{bernard2018}).  The present contribution to this line of work begins with a first variation formula for $\mathcal{W}$, which is computed in Section 3.  This result, combined with the invariances of $\mathcal{W}$ under translation, rotation, and (when applicable) rescaling leads to flux formulas, which are further used to establish a ``stress tensor'' whose divergence encodes the failure of a surface to be $\mathcal{W}$-critical.  This gives a divergence-form expression of the $\mathcal{W}$-surface Euler-Lagrange equation, which is related to results previously obtained by Y. Bernard and T. Riviere.  More precisely, in \cite{riviere2007} it is shown that all conformally-invariant PDE in 2-dimensions which are non-linear and elliptic admit a divergence-form expression, and in \cite{bernard2018} this computation is extended to more general Euler-Lagrange equations for functionals involving arbitrary functions of the squared mean curvature and the squared norm of the second fundamental form.  A similar expression is presently derived for the Euler-Lagrange PDE characterizing $\mathcal{W}$-surfaces, despite the general lack of conformal invariance in the functional $\mathcal{W}$.

Returning to the question of boundary versus interior inheritance, a partial characterization of rotationally-symmetric $\mathcal{W}$-surfaces with free boundary is given in Section 5.  In this case, it is seen that the answer as to how much the symmetries of the boundary control the solution on the interior is highly dependent on the behavior of the functional $\mathcal{W}$ with respect to rescalings (c.f. Definition~\ref{def:shrinkexp}). Using subscripts to denote partial derivatives with respect to the subscripted quantity, the first main result is as follows.

\begin{theorem}
\label{thm:main1}
	Let $\mathcal{W}$ be scaling invariant, and $\Sigma \subset \mathbb{R}^3$ be an immersed $\mathcal{W}$-surface having free boundary with respect to $\Omega^2 \subset \mathbb{R}^3$. Suppose that $\Sigma$ and $\Omega$ share a common axis of rotational symmetry, and $\Omega$ is strictly convex. Then, one of the following holds:
\begin{enumerate}
\item $\Sigma$ is spherical and $F\equiv 0$ on  $\Sigma$. 
\item $F_H\equiv 0$ and $F_K$ is constant on $\Sigma$. 
\end{enumerate}
\end{theorem}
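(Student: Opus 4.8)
The plan is to use the rotational symmetry to collapse the problem onto the generating profile curve, and then to balance the two relevant conservation laws---axial translation and scaling---against the free boundary conditions on the strictly convex support $\Omega$. I would first write $\Sigma$ as a surface of revolution about the common axis $\*e_3$, parametrized by an arclength profile curve, and record $H$, $K$, and $d\mu$ along it. Because $\Sigma$ is a $\mathcal{W}$-surface the stress tensor $T$ is divergence free, so the flux $\oint_c T(\nu)\,ds$ (with $\nu$ the outward conormal) agrees on all latitude circles $c$; rotational symmetry forces it to be purely axial, $\oint_c T(\nu)\,ds=\Phi\,\*e_3$ with $\Phi$ constant along $\Sigma$. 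Scaling invariance supplies a second first integral: evaluating the first variation on the dilation field $\*X=\*r$ and using both $\delta_{\*r}\mathcal{W}=0$ and $\operatorname{div}T=0$ produces a conserved scaling flux built from $\langle T(\nu),\*r\rangle$ and the Lagrangian $F$. Pointwise, this same invariance is equivalent to the Euler homogeneity relation $H F_H+2K F_K=2F$, which I will invoke at the very end.

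Next I would impose the free boundary conditions. Admissible variations move $\partial\Sigma$ only along $\Omega$, so the boundary contribution to the first variation must vanish against every field tangent to $\Omega$. This separates into a force-balance requirement---that $T(\nu)$ be normal to $\Omega$ along $\partial\Sigma$---together with a residual scalar condition carried by the highest-order boundary term, which is precisely a condition on $F_H$ along $\partial\Sigma$. Strict convexity of $\Omega$ is what makes these relations bite: it holds the outward normal $\*N_\Omega$, the position vector $\*r$, and the axis $\*e_3$ in general position along the boundary latitude, so that none of the pairings $\langle\*N_\Omega,\*e_3\rangle$ or $\langle\*r,\*N_\Omega\rangle$ degenerates and the boundary conditions genuinely constrain the geometry.

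Finally I would combine these ingredients. Substituting the axial and scaling first integrals into the free boundary relations, I expect the resulting identity to organize as a product of the umbilicity defect $\kappa_1-\kappa_2$ with $F_H$, with strict convexity excluding the degenerate possibilities and leaving exactly two branches. On the first, $\kappa_1\equiv\kappa_2$, so $\Sigma$ is totally umbilic and hence spherical; there $H$ and $K$ are constant, $F$ is therefore constant along $\Sigma$, and the surviving scaling/boundary relation collapses to $F\equiv 0$---alternative (1). On the second, $F_H\equiv 0$ on $\Sigma$, so the Euler relation reduces to $K F_K=F$; solving this gives $F=cK$ and hence $F_K\equiv c$ constant along $\Sigma$---alternative (2). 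The principal obstacle should be precisely this extraction of the clean $(\kappa_1-\kappa_2)F_H$ factorization from the coupled first integrals and boundary data, and the use of strict convexity to rule out any intermediate, non-spherical solution with $F_H\neq 0$; a secondary difficulty is \emph{globalizing} the boundary identity $F_H=0$ from $\partial\Sigma$ to all of $\Sigma$ through the conserved scaling flux, so that the conclusions (1)--(2) hold on the entire surface rather than merely along its boundary.
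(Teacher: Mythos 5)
Your outline has the right skeleton, and it is essentially the paper's: translation and scaling fluxes evaluated on latitude circles, the natural boundary condition $F_H+\kappa_n F_K=0$, the force balance along $\Omega$, the factorization of the conormal flux component as $\tfrac12\big(h(\*T,\*T)-h(\con,\con)\big)F_H$, and the homogeneity relation $HF_H+2KF_K=2F$ to kill $F$ in the spherical branch. But the steps you defer as ``obstacles'' are where the proof actually lives, and two of them fail as you describe them. (i) The free-boundary condition does \emph{not} make the boundary flux density $\*V$ vanish; it only gives $\*V\parallel\*v$ (see \eqref{eq:rotfree2}), which is a priori compatible with $\*V\neq\*0$. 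The paper breaks this with an origin trick plus a sharp convexity fact: since $\*V$ is a constant combination of $\con$ and $\*n$ along each circular component, the origin can be slid along the axis so that $\langle\*r,\*V\rangle=0$ on one chosen component (Lemma~\ref{lem:neworigin}); if $\*V\neq\*0$ there, parallelism forces $\langle\*r,\*v\rangle=0$, i.e.\ tangency of the position vector to the convex profile curve $\gamma=\Omega\cap\Pi$, and strict convexity (the convex region lies strictly on one side of that tangent line; equivalently, the tangent lines through the origin are exhausted by the mirror-symmetric tangency pair of that same latitude) forbids the radial pairing from degenerating at the \emph{other} component, so the scaling flux \eqref{eq:rotsymflux2} forces $\*V=\*0$ there. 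Your ``general position / nondegenerate pairings'' heuristic is in fact false at the chosen component, where $\langle\*r,\*v\rangle$ degenerates by construction. (ii) Globalization is not a secondary matter: Lemma~\ref{lem:Vis0} runs \emph{both} flux identities on every truncation of $\Sigma$ by latitude planes, concluding $\*V\equiv\*0$ on all of $\Sigma$ and hence the two pointwise interior equations $(h(\*T,\*T)-h(\con,\con))F_H=0$ and $\nabla_\con F_H+h(\*T,\*T)\nabla_\con F_K=0$; your constant axial flux $\Phi$ alone only controls $\langle\*e,\*V\rangle$ and cannot pin down $\*V$ without the radial pairing and the origin choice. (Note also that with a single boundary component, $\langle\*e,\*V\rangle=\langle\*r,\*V\rangle=0$ together with $\*e\nparallel\*r$ already yields $\*V=\*0$ with no convexity at all, Proposition~\ref{prop:onebdry}; convexity is needed precisely for the two-component case.)

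The one outright error is your endgame in branch (2): from $F_H\equiv0$ on $\Sigma$ and the Euler relation you write $KF_K=F$ and ``solve'' to get $F=cK$, hence $F_K\equiv c$. This is invalid: $F_H=0$ holds only along the one-parameter set of values $(H(p),K(p))$ realized by $\Sigma$, not identically as a function of two variables, so the Euler relation cannot be integrated as an ODE in $K$; and the pointwise identity $F=KF_K$ on $\Sigma$ does not make $F_K$ constant. The paper instead extracts constancy from the normal component of $\*V\equiv\*0$: since $F_H\equiv0$ on $\Sigma$ gives $\nabla F_H=0$, one is left with $h(\*T,\*T)\nabla_\con F_K=0$, and a case analysis on rotationally symmetric pieces (either $h(\*T,\*T)=0$, which by symmetry forces $\*n=\pm\*e$ and local flatness, or $\nabla_\con F_K=0$, which with $\nabla_\*T F_K=0$ from symmetry gives constancy), patched along an exhaustion of $\Sigma$. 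Until you supply (i), (ii), and this replacement for the ODE step, the proposal is a correct outline of the paper's strategy rather than a proof.
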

\begin{remark}
	Either case is certainly possible; see Remark \ref{example1}. For the latter, if $F_K\neq 0$ then, along $\partial \Sigma$, the unit vector of the axis of symmetry is normal to the surface.  
\end{remark}

\begin{remark}
When $\mathcal{W}$ is the conformal Willmore functional, the convexity assumption on $\Omega$ is unnecessary.  See Theorem~\ref{thm:noconvex}.
\end{remark}

This gives a characterization of rotationally-symmetric free-boundary $\mathcal{W}$-surfaces for functionals which are scaling invariant, and also extends what was obtained in \cite{palmer2000} for the conformal Willmore functional.  Moreover, this particular Theorem is seen to hinge on the scaling invariance property of $\mathcal{W}$, as it is not difficult to construct counterexamples when the functional is not scaling invariant (see Remark~\ref{rem:counterex}). This and other applications to conformal Willmore surfaces are discussed further in Section 5. 

On the other hand, there are many interesting $\mathcal{W}$-functionals that do not remain static under rescaling.  For example, the well-known Helfrich-Canham functional \cite{canham1970} for measuring bio-membrane energy per unit area is expressed as
\begin{equation}\label{eq:helfrich}
    \mathcal{W}_{HC}(\*r) := \int_{\Sigma)} k_c(H+c_0)^2 + \overline{k}K\, d\mu,
\end{equation}
where $\overline{k}, k_c$ are some physical rigidity constants, and $c_0$ is known as the spontaneous curvature of the membrane.  From a physical point of view, it is clear that this functional should not be scaling invariant, and indeed it is not.  In fact, knowing that this property does not hold goes a long way toward determining how much control the boundary of a critical surface can exert over the interior.  To study this precisely, we make the following definition
\begin{definition}\label{def:shrinkexp}
A $\mathcal{W}$-functional will be called scaling-invariant provided that 
\begin{equation}\label{eq:scaleinvar}
    F(tH, t^2 K)= t^2 F(H, K),
\end{equation}
for any $t>0$.  On the other hand, $\mathcal{W}$ will be called expanding (resp. shrinking) provided that
\begin{align*}
    2F - H F_H - 2K F_K &\geq 0 \\
    \big(\text{resp.}\,\,2F - H F_H - 2K F_K &\leq 0 \big).
\end{align*} 
In particular, a scaling invariant functional is both expanding and shrinking.
\end{definition}

With this terminology in place, the following is proved.

\begin{theorem}\label{thm:main2}
Suppose $\mathcal{W}$ is a functional which is either shrinking or expanding, and let $\Sigma \subset \mathbb{R}^3$ be an immersed $\mathcal{W}$-surface with boundary $\partial\Sigma$ and adapted orthonormal frame field $\{\*T,\*n,\con\}$ such that $\*T$ is tangent to $\partial\Sigma$, $\*n$ is everywhere normal to $\Sigma$, and $\con = \*T\times \*n$. Suppose additionally that the following boundary conditions are satisfied:
 \begin{align*}
     0 &= \tau_g F_H, \\
     0 &= F - h(\con,\con)F_H -K F_K, \\
     0 &= h(\nabla F_K,\con) - \nabla_\con F_H - H\nabla_\con F_K, 
 \end{align*}
where $h:T\Sigma\times T\Sigma\to\mathbb{R}$ denotes the second fundamental form of $\Sigma$ and $\tau_g = h(\*T,\*\con)$ is the geodesic torsion of $\partial\Sigma$.  Then, $2F-HF_H-2KF_K\equiv 0$ on $\Sigma$.
\end{theorem}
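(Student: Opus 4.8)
The quantity governing the statement is $\Phi := 2F - HF_H - 2KF_K$, whose sign is exactly what Definition~\ref{def:shrinkexp} uses: $\mathcal{W}$ is expanding iff $\Phi\ge 0$ on $\Sigma$ and shrinking iff $\Phi\le 0$. Since in either case $\Phi$ has a fixed sign, it suffices to prove the single integral identity $\int_\Sigma \Phi\,d\mu = 0$; the pointwise conclusion $\Phi\equiv 0$ then follows at once. The plan is thus to realize $\int_\Sigma\Phi\,d\mu$ as the scaling (Noether/Pohozaev-type) variation of $\mathcal{W}$ and to show that the prescribed boundary data force the corresponding boundary flux to vanish.

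First I would compute the variation of $\mathcal{W}$ along the dilation $\*r_t = (1+t)\*r$, whose infinitesimal generator is the position field $\*X = \*r$. Under a homothety of ratio $\lambda$ one has $H\mapsto \lambda^{-1}H$, $K\mapsto\lambda^{-2}K$, and $d\mu\mapsto\lambda^2\,d\mu$, so differentiating $\mathcal{W}((1+t)\*r)=\int_\Sigma F\big((1+t)^{-1}H,(1+t)^{-2}K\big)(1+t)^2\,d\mu$ at $t=0$ gives
\[
\delta\mathcal{W}(\*r)=\frac{d}{dt}\Big|_{t=0}\mathcal{W}\big((1+t)\*r\big)=\int_\Sigma\big(2F-HF_H-2KF_K\big)\,d\mu=\int_\Sigma\Phi\,d\mu .
\]
On the other hand, the first variation formula of Section~3 expresses $\delta\mathcal{W}(\*X)$, for an arbitrary field $\*X$, as an interior term paired against the Euler--Lagrange operator plus a boundary flux over $\partial\Sigma$. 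Since $\Sigma$ is a $\mathcal{W}$-surface the interior integrand vanishes identically, so only the boundary flux survives, and specializing to $\*X=\*r$ yields
\[
\int_\Sigma\Phi\,d\mu=\oint_{\partial\Sigma}\langle \*Q(\*r),\con\rangle\,ds ,
\]
where $\*Q$ is the stress/flux extracted in Section~3.

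The main work—and the step I expect to be the principal obstacle—is to expand the flux density $\langle\*Q(\*r),\con\rangle$ in the adapted frame $\{\*T,\*n,\con\}$ and to recognize the resulting scalar coefficients as precisely the three hypothesized boundary expressions. Writing $\*r=\langle\*r,\*T\rangle\,\*T+\langle\*r,\*n\rangle\,\*n+\langle\*r,\con\rangle\,\con$ and using the structure equations together with the definition $\tau_g=h(\*T,\con)$, I expect the flux to organize as
\[
\langle\*Q(\*r),\con\rangle=\langle\*r,\con\rangle\big(F-h(\con,\con)F_H-KF_K\big)+\langle\*r,\*T\rangle\,\tau_g F_H+\langle\*r,\*n\rangle\big(h(\nabla F_K,\con)-\nabla_\con F_H-H\nabla_\con F_K\big)+\tfrac{d}{ds}(\cdots),
\]
up to an exact derivative along the closed curve $\partial\Sigma$. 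The three scalar coefficients are exactly the three boundary conditions of the statement, so under those conditions the flux density vanishes, while the exact-derivative remainder integrates to zero around each closed boundary component.

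Granting this identification, the boundary integral is zero, hence $\int_\Sigma\Phi\,d\mu=0$; the definite sign of $\Phi$ in the shrinking or expanding case then forces $\Phi\equiv 0$ on $\Sigma$, which is the assertion. The delicate points are the bookkeeping of the integration-by-parts terms generated by the normal component $\langle\*r,\*n\rangle$ and its conormal derivative (which is what produces the derivative combination in the third condition), and verifying that the exact-derivative remainder genuinely closes up on $\partial\Sigma$; both rely on the precise form of the boundary flux derived in Section~3.
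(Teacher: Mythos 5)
Your proposal is correct and follows essentially the same route as the paper: the paper derives the dilation flux formula (Corollary 3.4, specializing the first variation to $\*X=\*r$ and using criticality), then in Lemma 6.1 expands the boundary flux pointwise in the adapted frame and finds exactly your three coefficients --- the $\con$-coefficient reducing to $F - h(\con,\con)F_H - KF_K$ via $H = h(\*T,\*T) + h(\con,\con)$ and $K = h(\*T,\*T)h(\con,\con) - \tau_g^2$ --- after which sign-definiteness of $2F - HF_H - 2KF_K$ gives the pointwise vanishing. The only cosmetic difference is that the paper computes the flux density directly from $\nabla_\con\langle\*r,\*n\rangle$ and $h(\nabla\langle\*r,\*n\rangle,\con)$ without any boundary integration by parts, so the exact-derivative remainder you allow for never actually appears.
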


This result shows that the interior behavior of dilation-sensitive $\mathcal{W}$-functionals is highly affected by conditions on the boundary, and suggests a partial explanation for the differences seen between boundary-value problems for the conformal Willmore functional when compared to those for more rigid $\mathcal{W}$-functionals like the Helfrich-Canham energy (c.f. \cite{palmer2000},\cite{tu2004},\cite{elliott2017},\cite{bohle2008}).  The consequences of this are discussed further in Section 6.  One particularly interesting application involves the p-Willmore energy functional discussed in \cite{gruber2019,mondino2014},
\begin{equation*}
    \int_\Sigma |H|^p\, d\mu, \qquad p\in\mathbb{R}.
\end{equation*}
Consideration of Theorem~\ref{thm:main2} shows that in this case, for some values of $p$, there are no non-minimal critical surfaces which have zero mean curvature on their boundary.  More precisely, the following is observed.
\begin{theorem}\label{thm:pWillmore}
When $p>2$, any p-Willmore surface $\Sigma\subset\mathbb{R}^3$ with boundary which satisfies $H=0$ on $\partial\Sigma$ must be a minimal surface.
\end{theorem}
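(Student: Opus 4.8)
The plan is to realize the p-Willmore energy as a special case of the general $\mathcal{W}$-functional and apply Theorem~\ref{thm:main2}. Setting $F(H,K) = |H|^p$, I would first record the relevant partial derivatives: $F_K \equiv 0$, since $F$ is independent of $K$, while $F_H = p|H|^{p-2}H = p\,\mathrm{sgn}(H)|H|^{p-1}$ and $F_{HH} = p(p-1)|H|^{p-2}$. The crucial algebraic observation is the computation of the dilation quantity
\begin{equation*}
    2F - HF_H - 2KF_K = 2|H|^p - p|H|^p = (2-p)|H|^p.
\end{equation*}
For $p > 2$ the coefficient $(2-p)$ is strictly negative, so this expression is everywhere nonpositive; hence the p-Willmore functional is shrinking in the sense of Definition~\ref{def:shrinkexp}, and Theorem~\ref{thm:main2} is applicable in principle.

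Next I would verify that the three boundary conditions of Theorem~\ref{thm:main2} hold whenever $H = 0$ on $\partial\Sigma$. Because $F_K \equiv 0$ identically, the terms $K F_K$, $\nabla_\con F_K$ and $h(\nabla F_K, \con)$ all vanish automatically, so the second and third conditions collapse to $F = 0$ and $\nabla_\con F_H = 0$ on $\partial\Sigma$ respectively, while the first reduces to $\tau_g F_H = 0$. Evaluating along $\partial\Sigma$, the hypothesis $H = 0$ forces $F = |H|^p = 0$ and $F_H = p\,\mathrm{sgn}(H)|H|^{p-1} = 0$ (using $p - 1 > 0$), which dispatches the first two conditions. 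For the third, I would write $\nabla_\con F_H = F_{HH}\,\nabla_\con H = p(p-1)|H|^{p-2}\nabla_\con H$ and observe that the factor $|H|^{p-2}$ vanishes on $\partial\Sigma$ precisely because $p - 2 > 0$; since $\nabla_\con H$ is bounded by smoothness, the whole expression vanishes.

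With all boundary conditions satisfied, Theorem~\ref{thm:main2} yields $2F - HF_H - 2KF_K \equiv 0$ on $\Sigma$, that is $(2-p)|H|^p \equiv 0$. As $p > 2$ makes $(2-p) \neq 0$, this forces $|H| \equiv 0$, so $H \equiv 0$ on $\Sigma$ and $\Sigma$ is minimal.

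I expect the only delicate point to be the verification of the third boundary condition and, more broadly, the regularity of $F = |H|^p$ at points where $H = 0$. It is exactly the strict inequality $p > 2$ that guarantees both the correct sign of $(2-p)$ and the vanishing of the factor $|H|^{p-2}$ controlling $\nabla_\con F_H$; for $p = 2$ the conclusion is vacuous (the dilation quantity vanishes identically and the functional is scaling invariant), while for $1 < p < 2$ the term $|H|^{p-2}$ blows up as $H \to 0$ and the argument breaks down. Thus the hypotheses are sharp for this line of reasoning, and the proof amounts to careful bookkeeping of which curvature-dependent terms survive under the assumption $H = 0$.
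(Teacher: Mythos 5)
Your proof is correct and takes essentially the same route as the paper's: both verify that $F = |H|^p$ is shrinking for $p>2$ via $2F - HF_H - 2KF_K = (2-p)|H|^p$, check that the hypothesis $H=0$ on $\partial\Sigma$ makes all the boundary conditions of the flux machinery vanish (with $p>2$ ensuring $\nabla_\con F_H = p(p-1)|H|^{p-2}\nabla_\con H = 0$ there), and conclude $|H|^p \equiv 0$ on $\Sigma$. The only difference is that you invoke Theorem~\ref{thm:main2} directly where the paper routes through Corollary~\ref{cor:minimal}; this is a harmless and arguably cleaner shortcut, since it sidesteps the real-analyticity hypothesis of that corollary, which $|H|^p$ does not literally satisfy at $H=0$ for non-even $p$ --- a regularity subtlety you rightly flag.
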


\begin{remark}
Note that it is possible to deduce from Theorem~\ref{thm:pWillmore} that there are no closed p-Willmore surfaces $\Sigma$ immersed in $\mathbb{R}^3$ when $p>2$.  Indeed, equation the result asserts that any such surface must be minimal, but there are no closed minimal surfaces immersed in $\mathbb{R}^3$, a contradiction.  This recovers a result proved in \cite{gruber2019}.
\end{remark}


To summarize, this manuscript is structured as follows: Section 2 briefly recalls the necessary mathematical background; Section 3 demonstrates the first variation of (\ref{eq:Wfunc}) and collects its ramifications; Section 4 considers how different conditions on the boundary influence the critical surfaces of $\mathcal{W}$-functionals; Section 5 studies such boundary-value problems subject to a rotational symmetry constraint and establishes Theorem~\ref{thm:main1}; Section 6 examines these problems for $\mathcal{W}$-functionals that are not scaling invariant, and establishes Theorems~\ref{thm:main2} and \ref{thm:pWillmore}.

\begin{remark}
It is interesting to note that many of the Theorems established here should have an analogue for hypersurfaces in any dimension, as the notion (but not the expression) of scaling invariance and the existence of $\*n$ are independent of these notions.  On the other hand, it is unlikely that these results extend to arbitrary codimension in their present form without some assumptions on the normal bundle to the immersion (e.g. flatness, parallel mean curvature).  Since a general curvature functional of interest $\int_\Sigma F(\kappa_1,...,\kappa_n)\,d\mu$ is also more complicated in dimension $n$, the results presented here leave plenty of compelling questions for future work.
\end{remark}
{\bf Acknowledgment.} Hung Tran was partially supported by a Simons Foundation Collaboration Grant and NSF grant DMS-2104988.  Magdalena Toda was partially supported by Simons Foundation Collaboration grant number 632274.  Part of this work was done while Hung Tran was visiting the Vietnam Institute for Advanced Study in Mathematics (VIASM). He would like to thank VIASM for financial support and hospitality.  All authors would like to thank the anonymous reviewers for helpful comments.

\section{Preliminaries}
In this section, we will fix the notation and conventions that will be used throughout the paper, and collect variation formulas for a surface in Euclidean space. First, let $\Sigma$ be a smooth oriented surface with potential boundaries and let $\*r$ be an isometric immersion, 
\[ \*r: \Sigma^2 \mapsto \mathbb{R}^{3},\]
with choice of unit normal field $\*n$.  Let $g$ be the metric on $\Sigma$ induced from the standard metric on $\mathbb{R}^3$, and let $d\mu$ denote its associated volume form. Let $D, \nabla$ be the connections on $\mathbb{R}^3$ and $(\Sigma,g)$ respectively. The second fundamental form $h$, mean curvature $H$, and Gaussian curvature $K$ are then defined as follows (Einstein summation assumed). For orthonormal vector fields $\*e_i, \*e_j \in TM$, it follows that
\begin{align*}
h_{ij} &=\left\langle{D_{\*e_i}\*e_j, \*n}\right\rangle=-\left\langle{D_{\*e_i}\*n, \*e_j}\right\rangle;\\
H&= g^{ij}h_{ij};\\
K &= \text{det}\left(g^{ik}h_{kj}\right) = \frac{\text{det}\,h}{\text{det}\,g},
\end{align*}
where $\langle\cdot,\cdot\rangle$ denotes the standard inner product on $\mathbb{R}^3$.
Moreover, along boundaries $\partial \Sigma$ we let $\con$ be the outward co-normal unit vector, $\*T$ the unit tangential vector field, and $ds$ the associated arc length.  As a consequence, the curvatures on the boundary can be expressed as,
\begin{align*}
    &H = h(\*T,\*T) + h(\con,\con) = \kappa_n + h(\con,\con), \\
    &K = h(\*T,\*T)h(\con,\con) - h(\*T,\con)^2 = h(\con,\con)\kappa_n - \tau_g^2,
\end{align*}
where the quantities
\begin{align*}
    \kappa_n &= \langle \nabla_\*T \*T, \*n \rangle = h(\*T,\*T), \\
    \tau_g &= \langle \nabla_\*T \con, \*n\rangle = h(\*T,\con)
\end{align*}
are, respectively, the normal curvature and geodesic torsion of $\partial\Sigma$ when considered as a curve in $\Sigma$.  Note that $\kappa_n$ measures how fast $\*T$ rotates into $\*n$ along the boundary curve, while $\tau_g$ measures how fast $\con$ rotates into $\*n$ (c.f. Figure~\ref{fig:boundary}).

\begin{figure}[htb!]
    \centering
    \def\svgwidth{0.5\linewidth}
    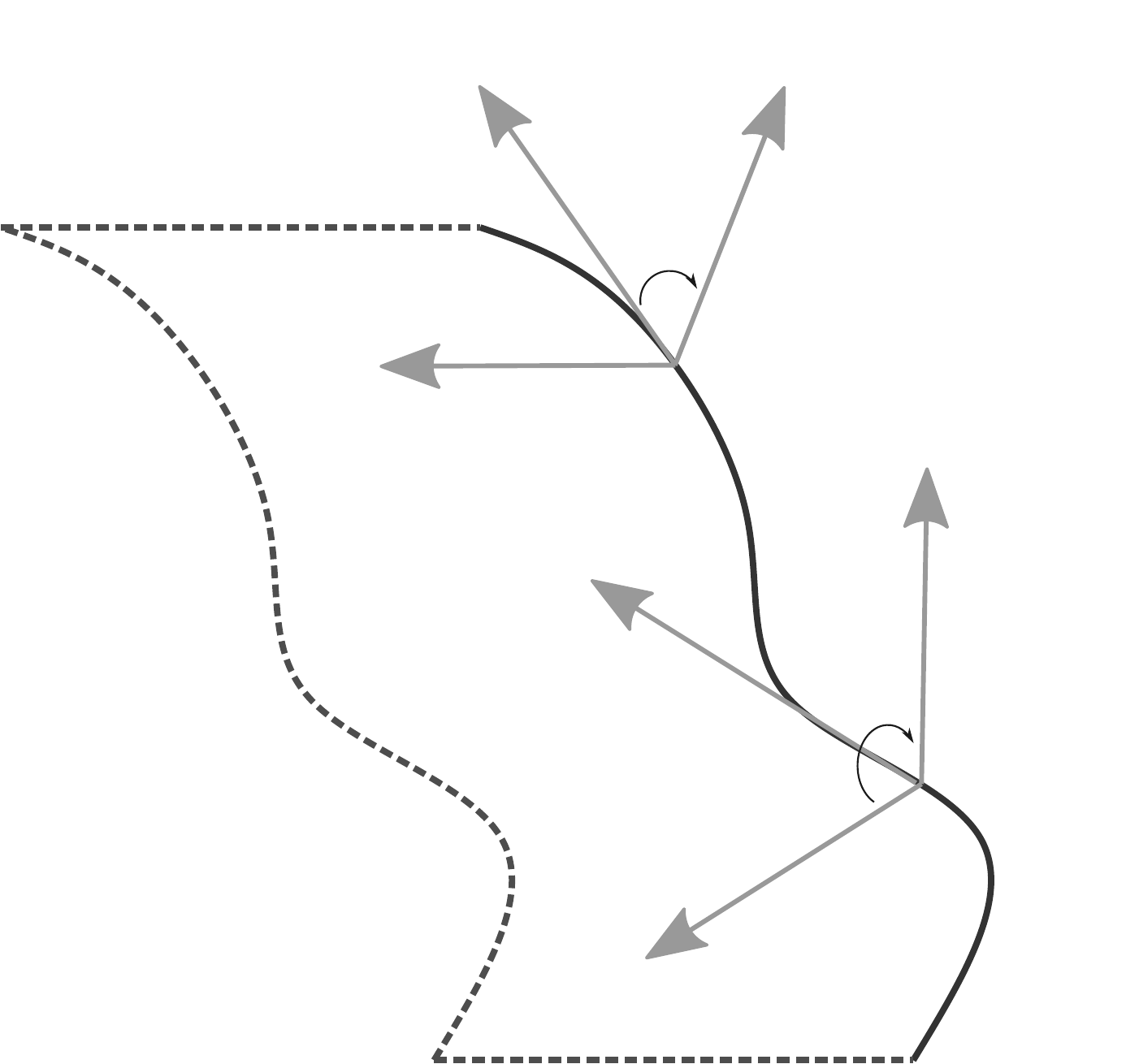
    \caption{$\kappa_n$ and $\tau_g$ as rates of rotation (for a left-handed frame).}
    \label{fig:boundary}
\end{figure}

\subsection{Functionals of interest}
The primary objects of study in this work will be functionals of the form
\[ \mathcal{W}(\*r) := \int_{\Sigma} \widetilde{F}\, d\mu,\]
where $\widetilde{F}$ is a smooth symmetric function in the principal curvatures of $\*r(\Sigma)\subset \mathbb{R}^3$. Expressed differently, this implies that $\widetilde{F}=F(H, K)$ is a smooth function of $H$ and $K$.

As mentioned in the Introduction, it is useful to keep in mind the symmetries that are present.  It is clear that a general $\mathcal{W}$-functional is invariant under translations and rotations of $\mathbb{R}^3$ (since $H$ and $K$ are rigid-motion invariant).  However, any particular $\mathcal{W}$-functional need not be invariant under changes of scale.  To see this, recall the consequences of rescaling an immersion $\*r \mapsto (1/t)\*r$ by some $t>0$.  In particular,
\begin{align*}
g  &\mapsto \frac{1}{t^2}\, g,\\
d\mu &\mapsto \frac{1}{t^2}\, d\mu,\\
H &\mapsto tH,\\
K &\mapsto t^2K, \\
\Delta &\mapsto t^2\Delta
\end{align*}
From this, it follows that the derivative of $\mathcal{W}$ under rescaling satisfies
\[\frac{d}{dt}\bigg|_{t=1}\int_\Sigma F(tH,t^2K) \frac{1}{t^2}\,d\mu = \int_\Sigma (H F_H + 2 K F_K - 2F)\,d\mu,\] 
which forms the motivation for Definition~\ref{def:shrinkexp}.  Note that a functional is either strictly expanding or shrinking if and only if the scaling excess $2F - H F_H - 2K F_K$ is strictly positive or negative.  Moreover, consideration of equation (\ref{eq:scaleinvar}) immediately yields the relationships
\begin{align*}
F_H (tH, t^2K) &= tF_H(H,K),\\
F_K (tH, t^2K) &= F_K(H,K),\\
F(0,0) &= F_H(0,0) = 0.
\end{align*} 
In particular, $F_K$ is itself scaling invariant whenever $\mathcal{W}$ is.


\subsection{Variation of geometric quantities}\label{variationpreliminaries}
It is advantageous to collect the various evolution equations that will be needed for the analysis of $\mathcal{W}$-functionals.  To that end, consider a variation of the immersion $\*r$ by a velocity vector field $\*X=u\*n+\bm{\zeta}$ where $u$ is smooth on $\Sigma$, $\*n$ is a choice of unit normal, and $\bm{\zeta}$ is tangential to the surface:
\begin{equation}\label{eq:normalvar}
\delta_\*X \*r \coloneqq \ddt \*r\Bigr|_{t=0} = \*X.
\end{equation}
There are then the following well known normal evolution equations; for example, see \cite{gruber2019}. 
\begin{align*}
\delta_{u\*n} g &= -2u h,\\
\delta_{u\*n} g^{ij} &= 2u h_{ij},\\
\delta_{u\*n} h_{ij} &= (\He{u})_{ij}-uh_{i}^{\ell}h_{\ell j},\\
\delta_{u\*n}  d\mu &= -uH \, d\mu,\\
\delta_{u\*n}  |h|^2 &= 2\left\langle{h, \He u}\right\rangle+2u |h|^3,\\
\delta_{u\*n}  &= -\nabla u,\\
\delta_{u\*n}  H &= u |h|^2+\Delta u,\\
\delta_{u\*n}  K &= H\Delta u-\left\langle{h, \He u}\right\rangle+ HKu,
\end{align*}
where $\He{u}$ denotes the Hessian of $u$ and $|h|^2 = H^2-2K$ denotes the squared norm of the second fundamental form.  Moreover, the surface Laplacian evolves by the following equation,
\begin{equation*}
\resizebox{0.99\hsize}{!}{%
$\delta_{u\*n} (\Delta f)= 2u \left\langle{h, \He f}\right\rangle+\Delta \left(\ddt f\right)+2h(\nabla u, \nabla f)- H\langle\nabla u, \nabla f\rangle +u \langle\nabla H,\nabla f\rangle.$
}
\end{equation*}

In addition, the variation induced by the tangential vector field $\bm{\zeta}$ is tracked as a Lie derivative. That is, for any function $f$, we have
\begin{equation*}
\delta_{\bm{\zeta}} \int_\Sigma f\, d\mu =\int_\Sigma \mathcal{L}_{\bm{\zeta}} \left(f\, d\mu\right) = \int_{\partial \Sigma} f\left\langle{\bm{\zeta}, \con}\right\rangle\, ds,
\end{equation*}
where the final equality is due to Stokes' Theorem and the fact that $df \wedge d\mu \equiv 0$ on $\Sigma$, since $\mu$ is a volume form.

\section{First Variation}
It is now opportune to compute the first variation formula corresponding to  (\ref{eq:Wfunc}), which will facilitate the analysis of $\mathcal{W}$-functionals.  In particular, the formulation presented here is applicable to both closed surfaces as well as surfaces with nontrivial boundary.  Further, symmetries of the $\mathcal{W}$-surface variational problem will be used to generate flux formulas, and a stress tensor will be given whose divergence measures the deviation of a surface from $\mathcal{W}$-criticality.  
\begin{theorem}\label{thm:firstvar}
Let $\Sigma$ be a compact smooth surface and $\*r(t): \Sigma \mapsto \mathbb{R}^3$ be a family of diffeomorphisms with velocity 
\[\delta_\*X \*r = \ddt \*r \Bigr|_{t=0} = \*X. \]
Then, the first variation of the functional $\mathcal{W}$ is given by
\begin{align*}
\delta_{\*X} &\mathcal{W} = \int_{\partial \Sigma} F\left\langle{\*X, \con}\right\rangle\, ds + \int_{\partial \Sigma} \left\langle{\*X, \*n}\right\rangle\Big(h(\nabla F_K, \con)-\nabla_\con F_H-H\nabla_\con F_K\Big)\, ds \\
&+\int_{\partial \Sigma} \Big((F_H+HF_K)\nabla_\con \left\langle{\*X, \*n}\right\rangle -F_K h(\nabla \left\langle{\*X, \*n}\right\rangle, \con)\Big)\, ds\\
&+ \int_{\Sigma} \left\langle{\*X, \*n}\right\rangle\Big(\Delta F_H+H\Delta F_K-\left\langle{h, \He F_K}\right\rangle+F_H|h|^2+HKF_K-HF\Big)\, d\mu.\\
\end{align*}
\end{theorem}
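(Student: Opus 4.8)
The plan is to linearize $\mathcal{W}$ by splitting the velocity into normal and tangential parts, $\*X = u\*n + \bm{\zeta}$ with $u = \langle\*X,\*n\rangle$, and to treat the two contributions separately, which is legitimate since the first variation is linear in $\*X$. The tangential piece is immediate: applying the Lie-derivative/Stokes identity from Section~\ref{variationpreliminaries} with $f = F$ gives $\delta_{\bm{\zeta}}\mathcal{W} = \int_{\partial\Sigma} F\langle\bm{\zeta},\con\rangle\,ds$, and since $\con\perp\*n$ we have $\langle\bm{\zeta},\con\rangle = \langle\*X,\con\rangle$, which produces the first boundary integral of the statement verbatim. The entire remaining computation therefore concerns the normal variation $\delta_{u\*n}\mathcal{W}$.

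For the normal part I would differentiate under the integral and apply the chain rule, $\delta_{u\*n}\mathcal{W} = \int_\Sigma\big(F_H\,\delta_{u\*n}H + F_K\,\delta_{u\*n}K\big)\,d\mu + \int_\Sigma F\,\delta_{u\*n}(d\mu)$, then substitute the evolution equations collected in Section~\ref{variationpreliminaries}: $\delta_{u\*n}H = u|h|^2 + \Delta u$, $\delta_{u\*n}K = H\Delta u - \langle h,\He u\rangle + HKu$, and $\delta_{u\*n}(d\mu) = -uH\,d\mu$. Gathering the terms in which $u$ carries no derivative already yields the undifferentiated part of the interior contribution, namely $u\,(F_H|h|^2 + HKF_K - HF)$. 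All of the real work lies in the two second-order expressions $F_H\Delta u$ and $F_K\big(H\Delta u - \langle h,\He u\rangle\big)$.

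The heart of the argument is a double integration by parts transferring every derivative off of $u$. For the first, Green's identity gives $\int_\Sigma F_H\Delta u = \int_\Sigma u\,\Delta F_H + \int_{\partial\Sigma}\big(F_H\nabla_\con u - u\,\nabla_\con F_H\big)\,ds$. For the second, the clean device is to recognize $H\Delta u - \langle h,\He u\rangle = \langle T,\He u\rangle$ with $T \coloneqq Hg - h$ the first Newton tensor, and to use that $T$ is \emph{divergence-free}: the contracted Codazzi equation in the flat ambient space gives $\nabla_i h^{ij} = \nabla^j H$, whence $\nabla_i(Hg^{ij} - h^{ij}) = 0$. This is exactly what makes the integration by parts close without leftover $\nabla H$ terms. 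Integrating by parts twice (the divergence-free property killing the terms where a derivative lands on $T$) yields the interior term $u\,(H\Delta F_K - \langle h,\He F_K\rangle)$ plus boundary integrals. Writing $T\con = H\con - h(\con,\cdot)^\sharp$, the boundary terms in which $u$ is differentiated assemble to $(F_H + HF_K)\nabla_\con u - F_K\,h(\nabla u,\con)$, and those in which $u$ is undifferentiated assemble to $u\,(h(\nabla F_K,\con) - \nabla_\con F_H - H\nabla_\con F_K)$; these are precisely the third and second boundary integrals of the statement. Combining the interior pieces and substituting $u = \langle\*X,\*n\rangle$ throughout gives the displayed formula.

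I expect the principal obstacle to be the bookkeeping in this double integration by parts: specifically, verifying the cancellation of the $F_K\langle\nabla H,\nabla u\rangle$ terms (equivalently, correctly invoking the divergence-freeness of $T$), and then sorting the boundary contributions according to whether $u$ or $\nabla u$ appears and re-expressing each cleanly in terms of $\nabla_\con$ and $h(\cdot,\con)$. The supporting steps — the chain rule, the substitution of the standard evolution equations, and the two Green identities — are routine given the preliminaries, so the proof reduces essentially to careful tracking of signs and boundary data.
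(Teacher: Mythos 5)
Your proposal is correct and follows essentially the same route as the paper: the same decomposition $\*X = u\*n + \bm{\zeta}$ with the tangential part dispatched by the Lie-derivative/Stokes identity, the same substitution of the evolution equations for $H$, $K$, and $d\mu$, and a double integration by parts powered by the Codazzi equation, with the interior and boundary terms sorting out exactly as in the stated formula. The lone difference is organizational: you group $H\Delta u - \left\langle h, \He u\right\rangle = \left\langle Hg - h, \He u\right\rangle$ and invoke the divergence-freeness of the Newton tensor $Hg - h$ (which is precisely the contracted Codazzi identity $\mathrm{div}_g\, h = \nabla H$ that the paper uses), so your integration by parts closes without leftover terms, whereas the paper integrates $F_K\left\langle h, \He u\right\rangle$ by parts directly and then cancels the resulting $2\left\langle \nabla F_K, \nabla H\right\rangle + F_K \Delta H$ against the expansion of $\Delta(H F_K)$ --- tidier bookkeeping on your side, identical substance.
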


\begin{proof}
For $X=u\*n+\bm{\zeta}$, the formulas from Section \ref{variationpreliminaries} yield
\begin{align*}
\delta_{u\*n}& \mathcal{W} =\int_{\Sigma} \Big(F_H (\delta_{u\*n}H)+F_K (\delta_{u\*n}K)\Big)\,d\mu+\int_{\Sigma}F (\delta_{u\*n} \,d\mu)\\
&\hspace{-0.3pc}= \int_{\Sigma}\Big(F_H(u|h|^2+\Delta u)+F_K(H\Delta u-\left\langle{h, \He u}\right\rangle +HKu)-uHF\Big)\, d\mu\\
&\hspace{-0.3pc}= \int_{\Sigma}\Big((F_H+HF_K)\Delta u +(F_H|h|^2+HKF_K-HF)u -F_K\left\langle{h, \He u}\right\rangle\Big)\, d\mu.
\end{align*}
Moreover, it follows from integration by parts and the Codazzi equation $\mathrm{div}_g\, h = \nabla H$ that
\begin{align*}
\int_{\Sigma} (F_H+HF_K)\Delta u \,d\mu &= \int_{\Sigma} u\Delta(F_H+HF_K)\, d\mu \\
&+\int_{\partial \Sigma} \Big((F_H+HF_K)\nabla_\con u -u\nabla_\con(F_H+HF_K)\Big)\,ds,\\
\int_{\Sigma} F_K\left\langle{h, \He u}\right\rangle d\mu &= \int_\Sigma u(\left\langle{h, \He F_K}\right\rangle + 2\langle\nabla F_K, \nabla H\rangle + F_K\Delta H)\, d\mu\\
&+\int_{\partial \Sigma} \Big(F_K h(\nabla u, \con)-u h(\nabla F_K, \con)-uF_K \nabla_\con H \Big)\, ds.
\end{align*}
Putting the above expressions together, we obtain
\begin{align*}
\delta_{u\*n} \mathcal{W} &=\int_{\Sigma} u\Big(\Delta(F_H+HF_K)-\left\langle{h, \He F_K}\right\rangle - 2\nabla F_K \nabla H - F_K\Delta H\Big)\,d\mu \\
&+\int_{\Sigma} u(F_H|h|^2+HKF_K-HF)\, d\mu\\
&+\int_{\partial \Sigma} \Big((F_H+HF_K)\nabla_\con u -u\nabla_\con(F_H+HF_K)\Big) \,ds\\
&+\int_{\partial \Sigma} \Big(-F_K h(\nabla u, \con)+u h(\nabla F_K, \con)+ uF_K \nabla_\con H \Big) \,ds,\\
&= \int_{\Sigma} u\Big(\Delta F_H+H\Delta F_K-\left\langle{h, \He F_K}\right\rangle + F_H|h|^2+HKF_K-HF\Big) \,d\mu\\
&+\int_{\partial \Sigma} \Big((F_H+HF_K)\nabla_\con u -F_K h(\nabla u, \con)\Big) \, ds\\
&+\int_{\partial \Sigma} u\Big(h(\nabla F_K, \con)-\nabla_\con F_H-H\nabla_\con F_K\Big) \, ds.
\end{align*}
Finally, recall that the tangential variation can be computed as
\[\delta_{\bm{\zeta}}\mathcal{W} = \delta_{\bm{\zeta}} \int_\Sigma F \,d\mu = \int_{\partial \Sigma} F\left\langle{\bm{\zeta}, \con}\right\rangle \, ds.\]
The result then follows. 
\end{proof}

\begin{remark}
In the case $F=H^2-4K$ corresponding to the conformally-invariant Willmore functional, we immediately recover the following (c.f. \cite{palmer2000}),
	\begin{align*}
	\delta_\*X \int_M (H^2-4K)\, d\mu &= \int_\Sigma \left\langle{\*X, \*n}\right\rangle (2\Delta H+ H(|h|^2-2K)) \,d\mu\\
	&+\int_{\partial \Sigma} \Big(4h-2 Hg\Big)(\nabla \left\langle{\*X, \*n}\right\rangle, \con)\, ds \\
	&+\int_{\partial \Sigma} \Big((H^2-4K) \left\langle{\*X, \con}\right\rangle-2\left\langle{\*X, \*n}\right\rangle\left\langle{\con,\nabla H}\right\rangle\Big)\, ds.
	\end{align*}
\end{remark}

The results of Theorem~\ref{thm:firstvar} motivate the following definition. 
\begin{definition}
$\Sigma$ is said to be a stationary surface with respect to $\mathcal{W}$ (or, in short, a $\mathcal{W}$-surface) provided it satisfies the Euler-Lagrange equation
 \[\Delta F_H+H\Delta F_K-\left\langle{h, \He F_K}\right\rangle+F_H|h|^2+HKF_K-HF=0.\]
\end{definition}

The first variation above immediately leads to some  useful flux formulas. 
\begin{corollary}\label{cor:fluxformula}
  Let $\Sigma$ be a compact $\mathcal{W}$-surface with boundary and $\*e$ a constant vector field. Then, the following hold:
\begin{align}
0 &= \int_{\partial \Sigma} \Big((F_H+HF_K)\nabla_\con \left\langle{\*e, \*n}\right\rangle -F_K h(\nabla \left\langle{\*e, \*n}\right\rangle, \con)\Big)\, ds \label{eq:transflux} \\
&+\int_{\partial \Sigma} \left\langle{\*e, \*n}\right\rangle\Big(h(\nabla F_K, \con)-\nabla_\con F_H-H\nabla_\con F_K\Big) \,ds + \int_{\partial \Sigma} F\left\langle{\*e, \con}\right\rangle \,ds; \notag \\
\int_\Sigma &(2F-HF_H-2KF_K)\,d\mu \label{eq:scaleflux} \\
&= \int_{\partial \Sigma} \Big((F_H+HF_K)\nabla_\con \left\langle{\*r, \*n}\right\rangle -F_K h(\nabla \left\langle{\*r, \*n}\right\rangle, \con)\Big)\, ds \notag\\
&+\int_{\partial \Sigma} \left\langle{\*r, \*n}\right\rangle\Big(h(\nabla F_K, \con)-\nabla_\con F_H-H\nabla_\con F_K\Big)\, ds+\int_{\partial \Sigma} F\left\langle{\*r, \con}\right\rangle\, ds; \notag \\
0 &= \int_{\partial \Sigma} \Big((F_H+HF_K)\nabla_\con \left\langle{\*e\times \*r, \*n}\right\rangle -F_K h(\nabla \left\langle{\*e\times \*r, \*n}\right\rangle, \con)\Big)\, ds \label{eq:rotflux} \\
&+\int_{\partial \Sigma} \left\langle{\*e\times \*r, \*n}\right\rangle\Big(h(\nabla F_K, \con)-\nabla_\con F_H-H\nabla_\con F_K\Big)\, ds+\int_{\partial \Sigma} F\left\langle{\*e\times \*r, \con}\right\rangle \,ds \notag.
\end{align}
\end{corollary}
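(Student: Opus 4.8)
The plan is to obtain all three identities from the single first variation formula of Theorem~\ref{thm:firstvar}, by specializing the velocity field $\*X$ to the infinitesimal generators of the three symmetries in question and using that $\Sigma$ is a $\mathcal{W}$-surface. The crucial observation is that, for a critical surface, the Euler--Lagrange integrand
\[\Delta F_H+H\Delta F_K-\langle h,\He F_K\rangle+F_H|h|^2+HKF_K-HF\]
vanishes identically on $\Sigma$, so the entire interior integral in Theorem~\ref{thm:firstvar} disappears. Thus for \emph{any} ambient vector field $\*X$ restricted to $\Sigma$, the first variation $\delta_\*X\mathcal{W}$ equals precisely the three boundary integrals on the right-hand sides of \eqref{eq:transflux}, \eqref{eq:scaleflux}, and \eqref{eq:rotflux}. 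It then remains only to evaluate the left-hand side $\delta_\*X\mathcal{W}$ by an independent symmetry argument for each choice of $\*X$ and to equate the two expressions. Compactness of $\Sigma$ guarantees that every boundary integral is finite and that Theorem~\ref{thm:firstvar} applies verbatim to each of the three smooth fields $\*e$, $\*r$, and $\*e\times\*r$.

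I would dispatch the translation flux \eqref{eq:transflux} and the rotation flux \eqref{eq:rotflux} together, since they rest on the same mechanism. Taking $\*X=\*e$ for a constant vector $\*e$ generates the rigid translation $\*r\mapsto\*r+s\*e$, while taking $\*X=\*e\times\*r$ generates the one-parameter rotation about the axis $\*e$. Because $H$, $K$, and $d\mu$ are all invariant under isometries of $\mathbb{R}^3$, the value of $\mathcal{W}$ is constant along each of these families, whence $\delta_\*e\mathcal{W}=0$ and $\delta_{\*e\times\*r}\mathcal{W}=0$. Substituting $\langle\*e,\*n\rangle,\langle\*e,\con\rangle$ and $\langle\*e\times\*r,\*n\rangle,\langle\*e\times\*r,\con\rangle$ respectively into the surviving boundary terms of Theorem~\ref{thm:firstvar} and setting each total to zero yields \eqref{eq:transflux} and \eqref{eq:rotflux} directly.

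The scaling flux \eqref{eq:scaleflux} is the only case in which the left-hand side is genuinely nonzero, and is therefore the part demanding the most care. Here $\*X=\*r$ generates the dilation $\*r\mapsto(1+s)\*r$, which is not an isometry, so I would compute $\delta_\*r\mathcal{W}$ directly using the rescaling relations recorded in Section~2. Writing the dilation in those terms via $t=1/(1+s)$ and differentiating gives
\[\frac{d}{ds}\Big|_{s=0}\int_\Sigma F\!\left(\tfrac{H}{1+s},\tfrac{K}{(1+s)^2}\right)(1+s)^2\,d\mu=\int_\Sigma(2F-HF_H-2KF_K)\,d\mu,\]
which is exactly the left-hand side of \eqref{eq:scaleflux}; equating this with the boundary terms produced by setting $\*X=\*r$ in Theorem~\ref{thm:firstvar} completes the identity. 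The main obstacle here is purely bookkeeping rather than conceptual: one must keep the sign and scaling conventions consistent, in particular noting that the outward dilation $\*r\mapsto(1+s)\*r$ corresponds to $t\to1^-$ in the shrinking convention of Section~2, so that the scaling excess $2F-HF_H-2KF_K$ emerges with the correct sign. No new estimates or regularity are needed beyond the first variation formula itself.
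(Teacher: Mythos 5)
Your proposal is correct and follows essentially the same route as the paper: both apply Theorem~\ref{thm:firstvar} with $\*X$ equal to the generators $\*e$, $\*r$, and $\*e\times\*r$ of translations, dilations, and rotations, use criticality to kill the interior term, and compute the left-hand side as $0$ for the rigid motions and as $\int_\Sigma(2F-HF_H-2KF_K)\,d\mu$ for the dilation via the rescaling relations of Section~2 (your sign bookkeeping matches the paper's $t$-convention). The only cosmetic difference is that the paper makes the rotation family explicit via Rodrigues' formula before reading off the velocity $\*e\times\*r$, which you simply assert.
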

\begin{proof}
First, consider a continuous family of translations $\*r(t)=\*r+t\*e$ for $(-\epsilon\leq t\leq \epsilon)$. It follows that
\begin{align*}
\ddt \*r(t)\Bigr|_{t=0} &= \*e, \\
\mathcal{W}(\*r(t)) &= \mathcal{W}(\*r(0)).
\end{align*}
The first expression now follows from Theorem \ref{thm:firstvar}.  Next, consider a continuous family of rescalings $\*r(t)=t\*r$ for $(1-\epsilon\leq t\leq 1+\epsilon)$. Then, it follows that 
\begin{align*}
\ddt \*r(t)\Bigr|_{t=1} &= \*r,
\end{align*}	
Moreover, the consequences of rescaling recalled in Section 2 imply that
\[\mathcal{W}(\*r(t)) = \int_{\Sigma} F\left(\frac{H}{t}, \frac{K}{t^2}\right) t^2\, d\mu.\]
Taking the derivative at $t=1$ and applying Theorem \ref{thm:firstvar} now yields the second expression. Finally, consider a continuous family of rotations around a unit constant vector $\*e \in S^2$. By Rodrigues' rotation formula, it follows that
\begin{align*}
\*r(t) &= \*r\cos(t)+ (\*e\times \*r) \sin(t)+\left\langle{\*r, \*e}\right\rangle(1-\cos(t))\*e,\\
\ddt \*r(t)\Bigr|_{t=0} &= \*e\times \*r,\\
\mathcal{W}(\*r(t)) &= \mathcal{W}(\*r(0)),
\end{align*}
where $\times$ denotes the standard right-handed cross product on $\mathbb{R}^3$.  Again, applying Theorem \ref{thm:firstvar} leads to the third identity. 
\end{proof}

These flux formulas are useful to examine in the broader context of conservation laws.  To that end, recall the usual shape operator $S: T\Sigma \to T\Sigma$ defined by 
\[ \left\langle S(\*v),\*w \right\rangle = \left\langle -\nabla_\*v \*n, \*w \right\rangle = h(\*v,\*w), \]
for all vector fields $\*v,\*w \subset T\Sigma$, and recall that $S$ is known to be a linear map which is  self-adjoint with respect to the metric inner product on $\Sigma$ \cite[Chapter 13]{gray2006}.  Since the Euclidean inner product $\langle \cdot,\cdot \rangle$ on $\mathbb{R}^3$ restricts to give the metric inner product on $\Sigma \subset \mathbb{R}^3$, it follows that
\[ \left\langle S(\*v), \*w \right\rangle = \left\langle \*v, S(\*w) \right\rangle, \]
for all $\*v,\*w \subset T\Sigma.$ As a consequence of this, note that 
\[ \left\langle S^2(\*v), \*w \right\rangle = \left\langle S\left(S(\*v)\right), \*w \right\rangle = \left\langle S(\*v), S(\*w) \right\rangle = \left\langle \*v, S^2(\*w) \right\rangle. \]
Moreover, since $\nabla_\*v \*n \subset T\Sigma$ for all $\*v \subset T\Sigma$, it is evident that any ambient vector field $\*e \subset T\mathbb{R}^3$ satisfies
\[\left\langle \*e, \nabla_\*v \*n\right\rangle = \left\langle \*e^\top, \nabla_\*v \*n \right\rangle, \]
where $\*e^\top$ denotes the projection of $\*e$ onto $T\Sigma$.  In view of this, $S(\*e)$ will be used to denote the vector $S\left(\*e^\top\right)$ in the sequel.

With these additional notions in place, it is now possible to construct a stress tensor associated to the $\mathcal{W}$-functional whose divergence encodes deviation from $\mathcal{W}$-criticality.  This implies a conservation law for $\mathcal{W}$-surfaces as expressed by the following result.

\begin{theorem}\label{thm:stress}
Let 
\begin{align*}
&T = F_K\,S^2 - (F_H + H F_K)\,S + \left( S(\nabla F_K) - \nabla F_H - H\nabla F_K\right) \otimes  \*n + F\nabla\*r, \\
&W = \Delta F_H+H\Delta F_K-\left\langle{h, \He F_K}\right\rangle+F_H|h|^2+HKF_K-HF.
\end{align*}
Then, it follows that \[\mathrm{div}_g\, T = -W\*n.\]  In particular, $\Sigma$ is a $\mathcal{W}$-surface if and only if $T$ is divergence-free.
\end{theorem}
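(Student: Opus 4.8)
The plan is to verify the identity by direct computation, viewing $T$ as a bundle homomorphism $T\Sigma\to\mathbb{R}^3$ (equivalently a section of $T^*\Sigma\otimes\mathbb{R}^3$) and computing its divergence summand by summand. I would fix a point $p\in\Sigma$ and work in a local orthonormal frame $\{\*e_i\}$ that is geodesic at $p$, so that $\nabla_{\*e_i}\*e_j=0$ there and $\mathrm{div}_g\,T=\sum_i D_{\*e_i}\big(T(\*e_i)\big)$ at $p$. Every term then splits into tangential and normal parts via the Gauss and Weingarten formulas $D_{\*e_i}\*e_j=\nabla_{\*e_i}\*e_j+h_{ij}\*n$ and $D_{\*e_i}\*n=-S(\*e_i)$. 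The last term $F\nabla\*r$ is the simplest, since $\nabla\*r$ sends $\*e_i\mapsto\*e_i$; its divergence produces $\nabla F=F_H\nabla H+F_K\nabla K$ tangentially together with $FH\*n$ normally.

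For the two shape-operator terms I would use the Codazzi identity $\mathrm{div}_g\,S=\nabla H$ together with the derived consequences
\[
\mathrm{div}_g\,S^2=S(\nabla H)+\tfrac12\nabla|h|^2=S(\nabla H)+H\nabla H-\nabla K,\qquad
\mathrm{div}_g\big(S(\nabla F_K)\big)=\langle\nabla H,\nabla F_K\rangle+\langle h,\He F_K\rangle,
\]
both of which follow by expanding $\nabla_{\*e_i}(S^2\*e_i)$ and $\nabla_{\*e_i}(S\nabla F_K)$ and invoking the full symmetry of $\nabla h$. The normal parts of these terms are governed by the trace power sums $\mathrm{tr}\,S=H$, $\mathrm{tr}\,S^2=|h|^2=H^2-2K$, and $\mathrm{tr}\,S^3=H^3-3HK$. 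The tensor-product term $\big(S(\nabla F_K)-\nabla F_H-H\nabla F_K\big)\otimes\*n$ contributes $-S$ applied to its tangential factor (from differentiating $\*n$) tangentially, and $\mathrm{div}_g\big(S(\nabla F_K)-\nabla F_H-H\nabla F_K\big)=\langle h,\He F_K\rangle-\Delta F_H-H\Delta F_K$ normally.

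Assembling the four summands, I would first check that the tangential parts cancel identically: the $S^2(\nabla F_K)$ contributions from the first and third terms annihilate each other, the remaining $S(\nabla H)$, $S(\nabla F_H)$, and $S(\nabla F_K)$ pieces cancel against the expansion of $-S\big(\nabla(F_H+HF_K)\big)$, and the plain gradients $\nabla K$ and $\nabla H$ cancel against $\nabla F$ coming from $F\nabla\*r$. This total tangential cancellation is the sharpest consistency check and confirms that $\mathrm{div}_g\,T$ is purely normal. Collecting the normal coefficients and simplifying with $F_H|h|^2=F_H H^2-2F_HK$ then yields exactly $-W$, establishing $\mathrm{div}_g\,T=-W\*n$; the final sentence of the statement is then immediate, since $T$ is divergence-free precisely when $W=0$, which is the $\mathcal{W}$-surface Euler--Lagrange equation.

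I expect the main obstacle to be the bookkeeping in the two Codazzi-based identities above, where the Hessian contraction $\langle h,\He F_K\rangle$ must be produced correctly and where the sign conventions for $S$ and $\*n$ must be tracked consistently; a single wrong sign there would spoil both the tangential cancellation and the final matching with $-W$. As an alternative route that sidesteps the pointwise computation, one observes that for a constant field $\*e$ the boundary terms in the first variation of Theorem~\ref{thm:firstvar} sum to $\int_{\partial\Sigma}\langle T(\con),\*e\rangle\,ds$ (this is precisely how $T$ is reverse-engineered from \eqref{eq:transflux}); combining the translation invariance $\delta_\*e\mathcal{W}=0$, which holds on every subdomain, with the divergence theorem then gives $\int_{\Sigma'}\langle\mathrm{div}_g\,T+W\*n,\*e\rangle\,d\mu=0$ for all subdomains $\Sigma'$ and all constant $\*e$, whence the pointwise identity follows by localization.
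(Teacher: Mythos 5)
Your proposal is correct, and your primary argument takes a genuinely different route from the paper. The paper explicitly declines the direct computation (``though this can be verified by direct computation, it is more instructive\dots'') and instead uses exactly the variational argument you sketch at the end as an alternative: it rewrites the boundary integrand of the translation flux formula \eqref{eq:transflux} as $\langle T\*e,\con\rangle$, integrates by parts to get $\int_\Sigma \langle \*e,\mathrm{div}_g\,T\rangle\,d\mu$, combines this with $\delta_\*e\mathcal{W}=0$ and Theorem~\ref{thm:firstvar} to obtain $\int_\Sigma\langle\*e, W\*n+\mathrm{div}_g\,T\rangle\,d\mu=0$ on every surface, and then localizes. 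Your main route --- pointwise verification in a frame geodesic at a point --- checks out in every detail: the three divergence identities you quote are correct consequences of the total symmetry of $\nabla h$ (Codazzi), the tangential contributions cancel exactly as you list them, and the normal coefficients assemble to $F_K(H^3-3HK)-(F_H+HF_K)(H^2-2K)+\langle h,\He F_K\rangle-\Delta F_H-H\Delta F_K+FH=-W$, using $\mathrm{tr}\,S^3=H^3-3HK$. One convention point deserves an explicit line in a full writeup: the paper contracts $T$ with ambient constant vectors, reading $(X\otimes\*n)\*e=\langle\*e,\*n\rangle X$, whereas you read $T:T\Sigma\to\mathbb{R}^3$ with $(X\otimes\*n)\*v=\langle X,\*v\rangle\*n$; since $S$ and $S^2$ are self-adjoint these two tensors are mutual adjoints, and for constant $\*e$ one has $\mathrm{div}_g(T\*e)=\langle\*e,\sum_i D_{\*e_i}(T(\*e_i))\rangle$, so the two notions of $\mathrm{div}_g\,T$ coincide and your computation proves the same identity the paper exploits at the boundary. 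As for what each approach buys: your computation is self-contained, confirms all signs under the paper's conventions for $S$ and $h$, and delivers the pointwise identity directly, at the cost of tensor bookkeeping; the paper's Noether-style derivation explains where $T$ comes from (it is reverse-engineered from translation invariance) and avoids the frame calculation, but it only yields an integral identity and must reach the pointwise statement through a localization over subdomains --- a step the paper words somewhat awkwardly, and which you handle cleanly by observing that the integral identity holds on every subdomain $\Sigma'$ for every constant $\*e$.
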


\begin{proof}
Though this can be verified by direct computation, it is more instructive to derive this result as a consequence of translation invariance and Theorem~\ref{thm:firstvar}.  First, note that $\nabla_\*v\*r = \*v$ for any tangent vector $\*v$.  Moreover, let  $\*e$ be a constant vector field. Then, it follows from the definition of $S$ and the discussion above that
\begin{equation*}
\begin{split}
    &\int_{\partial\Sigma} (F_H + H F_K)\langle \*e,\nabla_\con \*n\rangle - F_K \, h\left(\nabla_\*e \*n, \con\right) \, ds \\
    &+\int_{\partial\Sigma} \langle\*e,\*n\rangle \left( h(\nabla F_K, \con) - \nabla_\con F_H - H\nabla_\con F_K \right) + F\langle \*e, \con \rangle\, ds \\
    &= \int_{\partial\Sigma} \left\langle \big( -(F_H + H F_K)\, S(\*e) + F_K \, S^2(\*e), \con\right\rangle\,ds \\
    &+\int_{\partial\Sigma} \left\langle \left(S(\nabla F_K) - \nabla F_H - H\nabla F_K\right) \langle\*e, \*n \rangle + F\nabla_\*e\*r \big), \con\right\rangle\, ds.
\end{split}
\end{equation*}

Moreover, using integration by parts, the above can be expressed as
\begin{equation*}
    \int_{\partial\Sigma} \langle T\*e,\con\rangle\, ds = \int_{\Sigma}\mathrm{div}_g\, (T\*e)\, d\mu = \int_\Sigma \left\langle \*e, \mathrm{div}_g\, T \right\rangle \, d\mu,
\end{equation*}
where $\mathrm{div}_g$ denotes the divergence with respect to the metric $g$ and the constancy of $\*e$ was used in the last equality.  Translation invariance and Theorem~\ref{thm:firstvar} now imply that for any constant vector field $\*e$ and any surface $\Sigma$,
\begin{equation} \label{eq:forconservation}
    0 = \delta_\*e \mathcal{W} = \int_\Sigma \langle \*e, W\*n + \mathrm{div}_g\, T\rangle\, d\mu.
\end{equation}
To complete the argument, we claim that the above implies that 
\[W\*n + \mathrm{div}_g\, T \equiv \*0.\]
To verify this, suppose it is not true.  Then, there must be a constant vector field $\*e_0$ such that (\ref{eq:forconservation}) is true for all $\Sigma$ but $W\*n + \mathrm{div}_g\, T$ is nonzero.  First, notice that $\*e_0$ cannot be everywhere orthogonal to $W\*n + \mathrm{div}_g\, T$ in this case, since the latter field is not constant.  Indeed, if it were, choosing $\*{e}_0 = W\*n + \mathrm{div}_g\, T$ in (\ref{eq:forconservation}) would produce a contradiction.   Moreover, smoothness implies that the function $\left\langle \*e_0, W\*n + \mathrm{div}_g\, T\right\rangle$ varies continuously on any $\Sigma$, so for any $p\in\Sigma\setminus\partial\Sigma$ where $W\*n + \mathrm{div}_g\, T \neq \*0$ we may choose a local surface $\Sigma_0 \subset \Sigma$ containing $p$ on which this field is strictly positive or strictly negative.  Without loss of generality, suppose that $\left\langle \*e_0, W\*n + \mathrm{div}_g\, T\right\rangle > 0$ on $\Sigma_0$.  In this case,
\[\int_{\Sigma_0} \left\langle \*e_0, W\*n + \mathrm{div}_g\, T\right\rangle\,d\mu > 0\]
which contradicts equation (\ref{eq:forconservation}).  Hence, the claim is true and the result follows.
\end{proof}

\begin{remark}\label{rem:recovery}
In the case where $F = H^2$, the stress tensor above reduces to 
\[T = -2H\, S - 2 \nabla H \otimes \*n + H^2\nabla \*r,\]
which coincides with the expression in \cite{bernard2016} up to our convention for $H$.  Moreover, considering the special cases $F = F(H^2)$ and $F = F(|h|^2)$ recovers Proposition 2.1 in \cite{bernard2018}.
\end{remark}

Besides their pleasing physical interpretation as conservation laws, divergence-form expressions have been historically helpful for the study of problems involving harmonic maps, minimal surfaces, and Willmore immersions.  Particularly, the ability to suppress one derivative in the Euler-Lagrange equation has enabled researchers to prove interesting results under much lighter regularity requirements than would otherwise be possible (see e.g. \cite{riviere2007} and the references therein). As mentioned in the Introduction, the result of Theorem~\ref{thm:stress} is similar in nature to a result of T. Rivi{\`e}re which asserts that all conformally-invariant PDE in 2 dimensions which are non-linear and elliptic admit a divergence-form expression.  Indeed, when the conformally-invariant PDE in question comes from a $\mathcal{W}$-functional, this result is recovered from Theorem~\ref{thm:stress} (see e.g. Remark~\ref{rem:recovery}).

\section{Boundary Considerations}
To develop knowledge about surfaces with boundaries, critical points of a generic functional $\mathcal{W}$ are now studied subject to different conditions at the boundary. First, the fixed-boundary condition is investigated. Note that in this case all variations must vanish along $\partial\Sigma$.
\begin{proposition}\label{prop:fixedboundary}
Let $\*r:\Sigma\mapsto M\subset \mathbb{R}^3$ be a smooth isometric immersion of a surface with boundary. Then $\Sigma$ is a critical point of $\mathcal{W}$ under the fixed-boundary condition if and only if,
\begin{align*}
0 &= \Delta F_H + H\Delta F_K-\left\langle{h, \He F_K}\right\rangle+F_H|h|^2+HKF_K-HF, \text{~~in $\Sigma$}\\
0 &= F_H + \kappa_nF_K \text{~~ on $\partial \Sigma$}.
\end{align*}
\end{proposition}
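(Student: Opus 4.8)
The plan is to specialize the first variation formula of Theorem~\ref{thm:firstvar} to the fixed-boundary setting and then invoke the fundamental lemma of the calculus of variations twice: once on the interior and once on the boundary. Writing $\*X = u\*n + \bm{\zeta}$ with $u = \langle \*X, \*n\rangle$, the fixed-boundary condition forces $\*X \equiv \*0$ along $\partial\Sigma$, so in particular $u = 0$, $\langle \*X, \con\rangle = 0$, and $\bm{\zeta} = \*0$ on $\partial\Sigma$. Consequently the tangential contribution $\int_{\partial\Sigma} F\langle\bm{\zeta},\con\rangle\, ds$ vanishes, as do the first two boundary integrals appearing in Theorem~\ref{thm:firstvar}.

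The key observation for the surviving boundary term is that, since $u$ vanishes identically along the boundary curve, its tangential derivative in the $\*T$-direction must also vanish there; hence $\nabla u = (\nabla_\con u)\,\con$ on $\partial\Sigma$. This lets me evaluate $h(\nabla u, \con) = (\nabla_\con u)\, h(\con,\con)$, so the remaining boundary integral collapses to
\[\int_{\partial\Sigma} (\nabla_\con u)\Big(F_H + HF_K - F_K\, h(\con,\con)\Big)\, ds.\]
Using the boundary decomposition $h(\con,\con) = H - \kappa_n$ recorded in Section~2, the bracketed factor simplifies to $F_H + \kappa_n F_K$, and the first variation reduces to
\[\delta_{\*X}\mathcal{W} = \int_{\partial\Sigma} (\nabla_\con u)\big(F_H + \kappa_n F_K\big)\, ds + \int_{\Sigma} u\, W\, d\mu,\]
where $W$ denotes the Euler--Lagrange expression introduced in Theorem~\ref{thm:stress}.

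From here the argument is standard. For the forward direction, I would first restrict to normal variations $u$ compactly supported in the interior $\Sigma\setminus\partial\Sigma$, so the boundary term drops out; vanishing of $\int_\Sigma u\, W\, d\mu$ for all such $u$ forces $W = 0$ in the interior and, by continuity, on all of $\Sigma$, which is the first equation. With $W\equiv 0$ in hand, the interior integral vanishes for every admissible variation, leaving $\int_{\partial\Sigma}(\nabla_\con u)(F_H + \kappa_n F_K)\, ds = 0$; since $\nabla_\con u$ may be prescribed freely along $\partial\Sigma$, a second application of the fundamental lemma yields the natural boundary condition $F_H + \kappa_n F_K = 0$ on $\partial\Sigma$. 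The converse is immediate, since both equations together make $\delta_{\*X}\mathcal{W}$ vanish for every admissible $\*X$.

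The main obstacle I anticipate is justifying that the boundary normal derivative $\nabla_\con u$ can be specified arbitrarily and independently of the (vanishing) boundary value of $u$ while keeping $\*X$ a smooth admissible variation. This amounts to confirming that the fixed-boundary condition constrains only the position of $\partial\Sigma$, and not the first-order normal data, so that the relevant test functions genuinely exhaust a space rich enough to conclude $F_H + \kappa_n F_K \equiv 0$. Once the admissible class is pinned down, the two applications of the fundamental lemma are routine.
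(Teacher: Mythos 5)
Your proposal is correct, and it follows the same two-step structure as the paper's proof: specialize Theorem~\ref{thm:firstvar} using $\*X=\*0$ on $\partial\Sigma$ (so that $\nabla_\*T u=0$, $\nabla u=(\nabla_\con u)\,\con$, and $h(\nabla u,\con)=h(\con,\con)\nabla_\con u$ there), kill the interior term first with variations supported in a small interior disk, and then extract the natural boundary condition from the surviving integral $\int_{\partial\Sigma}(\nabla_\con u)\big(F_H+\kappa_n F_K\big)\,ds$. The single point of divergence is precisely the obstacle you flag at the end: how to realize arbitrary Neumann data $\nabla_\con u$ along $\partial\Sigma$ while keeping $u=0$ there. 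The paper settles this by appealing to elliptic theory: it solves the biharmonic Dirichlet--Neumann problem $\Delta^2 u=0$ in $\Sigma$, $u=0$ and $\nabla_\con u=F_H+(H-h(\con,\con))F_K=F_H+\kappa_n F_K$ on $\partial\Sigma$, and takes $\*X=u\*n$, so the boundary integral becomes $\int_{\partial\Sigma}(F_H+\kappa_n F_K)^2\,ds$ and the condition follows. Your route can avoid that machinery entirely: in a collar neighborhood of $\partial\Sigma$ set $u=s\,\varphi\,\chi(s)$, where $s$ is a smooth defining function for $\partial\Sigma$ normalized so that $\nabla_\con s=1$ along the boundary, $\varphi$ is a smooth extension of the desired boundary function, and $\chi$ is a cutoff; this $u$ is smooth, vanishes on $\partial\Sigma$, and has prescribed conormal derivative, after which either the fundamental lemma or the same squaring trick ($\varphi=F_H+\kappa_n F_K$ on $\partial\Sigma$) finishes. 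So once this elementary construction is inserted, your argument is complete, and it is in fact lighter than the paper's, trading solvability of a fourth-order boundary-value problem for an explicit cutoff construction; what the paper's choice buys is only that the test variation is produced by a single canonical PDE rather than by hand.
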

\begin{proof}
Let $\*X$ be a velocity vector field on $\Sigma$. It follows from the fixed-boundary condition that $\*X\equiv \*0$ on $\partial \Sigma$. Furthermore, $\nabla_\*T\left\langle{\*X,\*n}\right\rangle=0$ on $\partial \Sigma$. Thus, Theorem \ref{thm:firstvar} implies, 
\begin{align*}
\delta_\*X &\mathcal{W} = \int_{\partial \Sigma} \Big((F_H+HF_K-F_Kh(\con,\con)\Big)\nabla_\con \left\langle{\*X, \*n}\right\rangle\, ds \\
&+ \int_{\Sigma} \left\langle{\*X, \*n}\right\rangle\Big(\Delta F_H+ H\Delta F_K-\left\langle{h, \He F_K}\right\rangle+F_H|h|^2+HKF_K-HF\Big) \, d\mu.
\end{align*}
Note that $\Sigma$ is a critical point if and only if $\delta_\*X\mathcal{W} =0$ for all such $\*X$. The first equation follows since it is possible to choose $\left\langle{\*X,\*n}\right\rangle$ to be zero everywhere except for any arbitrary small interior disk.  Then, as $\Sigma$ is smooth, elliptic theory (see e.g. \cite{gilbarg2001}) allows one to solve the biharmonic Dirichlet-Neumann problem 
\begin{align*}
\Delta^2 u &=0 \text{~~ in $\Sigma$},\\
u &=0 \text{~~ on $\partial \Sigma$},\\
\nabla_\con u &= F_H+(H-h(\con,\con))F_K \text{~~ on $\partial \Sigma$},
\end{align*} 
where $\*X = u\*n$ was chosen.  The second equation now follows since $H - h(\con,\con) = \kappa_n$ on $\partial\Sigma$. 
\end{proof}

On the other hand, it is reasonable to consider the possibility of a surface with free boundary, as is the case in many applications. That is, let $\Omega$ be a smooth surface in $\mathbb{R}^3$, and consider all variations of $\*r:\Sigma \mapsto \mathbb{R}^3$ such that $\*r(\partial \Sigma)\subset \Omega$. The following result characterizes $\mathcal{W}$-surfaces with free boundary.
\begin{proposition}\label{prop:freebndry}
Let $\*r:\Sigma\mapsto M\subset \mathbb{R}^3$, $\*r(\partial\Sigma)\subset \Omega$ be a smooth isometric immersion of a surface with boundary, and let $\*v$ be a unit normal to $\Omega$ compatible with the normal $\*n$ to $\Sigma$ (i.e. such that $\langle \*v,\*n\rangle \geq 0)$. Then, $\Sigma$ is a critical point of $\mathcal{W}$ under the free-boundary condition if and only if the following hold:
\begin{equation}\label{eq:freebdry}
\begin{cases}
0 &= \Delta F_H+ H\Delta F_K-\left\langle{h, \He F_K}\right\rangle+F_H|h|^2+HKF_K-HF \text{~~in $\Sigma$},\\
0 &= F_H + \kappa_nF_K \text{~~ on $\partial \Sigma$},\\
0 &=\left\langle{\*v, F \*n-\Big(\nabla_\*T(\tau_g F_K)+h(\nabla F_K,\con)-\nabla_\con F_H-H\nabla_\con F_K\Big)\con}\right\rangle \text{ on $\partial \Sigma$}.
\end{cases}
\end{equation}
\end{proposition}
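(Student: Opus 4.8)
The plan is to begin from the first variation formula of Theorem~\ref{thm:firstvar}, impose the free-boundary admissibility constraint that the velocity field $\*X$ be tangent to $\Omega$ along $\partial\Sigma$ (equivalently $\langle\*X,\*v\rangle=0$ there, since $\*v$ is normal to $\Omega$), and reduce $\delta_\*X\mathcal{W}$ to a sum of interior and boundary terms, each paired against an independent piece of the variational data. Since $\Sigma$ is free-boundary critical exactly when $\delta_\*X\mathcal{W}=0$ for every admissible $\*X$, the fundamental lemma of the calculus of variations will then force the vanishing of each coefficient, yielding the three stated equations; running the implications backwards gives the converse, so both directions of the equivalence emerge from the same computation.

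The first task is to rewrite the third boundary integral in Theorem~\ref{thm:firstvar}. Writing $u=\langle\*X,\*n\rangle$ and decomposing the tangential gradient along the boundary as $\nabla u=(\nabla_\*T u)\*T+(\nabla_\con u)\con$, I would expand $h(\nabla u,\con)=\tau_g\,\nabla_\*T u+h(\con,\con)\,\nabla_\con u$ and collect the $\nabla_\con u$ terms; using $H-h(\con,\con)=\kappa_n$ this produces the coefficient $F_H+\kappa_n F_K$. The leftover term $-F_K\tau_g\,\nabla_\*T u$ is then integrated by parts around the closed curve $\partial\Sigma$, moving the derivative onto $F_K\tau_g$ and producing $u\,\nabla_\*T(\tau_g F_K)$. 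After this manipulation the entire boundary contribution collapses to
\[
\int_{\partial\Sigma}\Big(F\langle\*X,\con\rangle+uA+(F_H+\kappa_n F_K)\nabla_\con u\Big)\,ds,\qquad A\coloneqq\nabla_\*T(\tau_g F_K)+h(\nabla F_K,\con)-\nabla_\con F_H-H\nabla_\con F_K.
\]

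With this reduction the three conditions fall out in sequence. Interior variations supported away from $\partial\Sigma$ isolate the bulk integrand and give the Euler--Lagrange equation. Next, choosing admissible fields with $u=\langle\*X,\con\rangle=0$ on $\partial\Sigma$ but $\nabla_\con u$ arbitrary (such $\*X$ restrict to a multiple of $\*T$ on $\partial\Sigma$ and are therefore tangent to $\Omega$) forces $F_H+\kappa_n F_K=0$. For the remaining free-boundary directions, note that since $\*T$ is orthogonal to $\*v$ the plane $\*T^\perp$ is spanned by $\{\con,\*n\}$ and contains $\*v$; the admissible variation transverse to $\partial\Sigma$ is thus a multiple of the unit field $\*w\coloneqq-\langle\*v,\*n\rangle\con+\langle\*v,\con\rangle\*n$, giving $\langle\*X,\con\rangle=-c\langle\*v,\*n\rangle$ and $u=c\langle\*v,\con\rangle$ for an arbitrary function $c$ along $\partial\Sigma$. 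Substituting into the reduced boundary integral and invoking the fundamental lemma yields $-F\langle\*v,\*n\rangle+A\langle\*v,\con\rangle=0$, i.e. $\langle\*v,\,F\*n-A\con\rangle=0$, which is precisely the third equation.

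I expect the main obstacle to be bookkeeping the independence of the boundary data once the constraint $\langle\*X,\*v\rangle=0$ is imposed: one must verify that $\nabla_\con u$ can genuinely be prescribed independently of the boundary values of $u$ and $\langle\*X,\con\rangle$, so that the second condition is isolated \emph{before} the constrained pairing $(\langle\*X,\con\rangle,u)=c(-\langle\*v,\*n\rangle,\langle\*v,\con\rangle)$ is used to extract the third. The integration-by-parts step around the closed curve $\partial\Sigma$---legitimate precisely because the curve has no endpoints---is what makes the geodesic-torsion contribution $\nabla_\*T(\tau_g F_K)$ appear in the final boundary condition, and keeping track of its sign is the one place where a computational slip is easy.
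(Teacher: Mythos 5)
Your proposal is correct and takes essentially the same route as the paper's own proof: both start from Theorem~\ref{thm:firstvar}, decompose $h(\nabla u,\con)=\tau_g\nabla_\*T u+h(\con,\con)\nabla_\con u$ so that the $\nabla_\con u$ coefficient becomes $F_H+\kappa_n F_K$, integrate the leftover $-F_K\tau_g\nabla_\*T u$ by parts around the closed curve $\partial\Sigma$ to produce $\nabla_\*T(\tau_g F_K)$, and then test against variations tangent to $\Omega$ after first isolating the interior Euler--Lagrange equation and the second boundary condition. Your unit field $\*w=-\langle\*v,\*n\rangle\con+\langle\*v,\con\rangle\*n$ is precisely the paper's $90$-degree rotation $R(\*v)$ in the plane $\*T^{\perp}$ spanned by $\{\con,\*n\}$, so your scalar condition $\langle\*v,\,F\*n-A\con\rangle=0$ is the same as the paper's conclusion that $\*V=F\con+A\*n$ must be parallel to $\*v$.
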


\begin{proof}
The free-boundary condition implies that any velocity vector field $\*X$ along $\partial \Omega$ must satisfy
\[ \left\langle{\*X, \*v}\right\rangle =0.\]
First, choose $\*X\equiv \*0$ along $\partial \Omega$. The first two equations are established through a similar argument as in the proof of Proposition \ref{prop:fixedboundary}. For the final equation, note that by Theorem \ref{thm:firstvar},
\begin{align*}
0=\delta_\*X\mathcal{W}&=\int_{\partial \Sigma} -F_K h(\con,\*T)\nabla_\*T\left\langle{\*X,\*n}\right\rangle\, ds + \int_{\partial \Sigma} F\left\langle{\*X, \con}\right\rangle\, ds\\
&+\int_{\partial \Sigma} \left\langle{\*X, \*n}\right\rangle\Big(h(\nabla F_K, \con)-\nabla_\con F_H-H\nabla_\con F_K\Big)\, ds.
\end{align*}
Integrating the first term by parts, it follows that
\begin{align*}
0 &= \int_{\partial \Sigma} \left\langle{\*X, \*V}\right\rangle\, ds,\\
\*V &= F\con+\Big(\nabla_\*T(F_K h(\con,\*T))+h(\nabla F_K,\con)-\nabla_\con F_H-H\nabla_\con F_K\Big)\*n.
\end{align*}
Criticality implies that the equations above hold for all $\*X$ such that $\left\langle{\*X, \*v}\right\rangle=0$ along $\partial \Sigma$. Therefore, $\*V$ must be parallel to $\*v$. 

Moreover, observe that $\*v$, $\*n$, and $\con$ lie in the same plane perpendicular to $\partial \Sigma$.  Therefore, the condition that $\*V$ be parallel to $\*v$ translates equivalently to $\langle \*V, R(\*v)\rangle = 0$, where $R$ denotes a 90-degree rotation in this plane.  Consequently, 
\[\left\langle{\*v, F\*n-\Big(\nabla_\*T(F_K h(\con,\*T))+h(\nabla F_K,\con)-\nabla_\con F_H-H\nabla_\con F_K\Big)\con}\right\rangle=0.\]
The last equation now follows under the observation that $h(\*T,\con) = \tau_g$ on $\partial\Sigma$.
\end{proof}

As an immediate consequence of this calculation, we make the following definition. 
\begin{definition}
	$\Sigma$ is called a $\mathcal{W}$-surface with free boundary provided it is a critical point of the functional $\mathcal{W}$ under the free-boundary condition. 
\end{definition}

Notice that when the conformal Willmore functional is considered, meaning when $F=H^2-4K$, these computations recover the critical conditions observed by B. Palmer in \cite{palmer2000} for conformal Willmore surfaces with free boundary.
\begin{equation*}
\begin{cases}
0 &= 2\Delta H+ H(|h|^2-2K) \text{ inside } \Sigma,\\
0 &= h(\con, \con)- h(\*T, \*T)  \text{ on } \partial \Sigma,\\
0 &= (H^2-4K)\left\langle{\*n,\*v}\right\rangle+(2\nabla_{\con}H+4\nabla_\*T h(\*T,\con))\left\langle{\con,\*v}\right\rangle \text{ on } \partial \Sigma.
\end{cases}
\end{equation*}
In particular, note that the second condition implies that the principal curvatures are everywhere equal at the boundary, meaning the boundary must be totally  umbilical. 

\section{Rotational Symmetry}
Free boundary $\mathcal{W}$-surfaces with rotational symmetry will now be studied, leading to the proof of Theorem~\ref{thm:main1}. To that end, suppose that $\Sigma$ has an axis of rotational symmetry, and (without loss of generality) assume $\Sigma$ is symmetric about the $x$-axis. Then, it is possible to truncate $\Sigma$ by some planes perpendicular to the $x$-axis, so that the boundary $\partial\Sigma$ of the truncated surface has components $\partial_i\Sigma$, each of which is circular.

\begin{figure}[htb!]
    \centering
    \def\svgwidth{0.5\linewidth}
    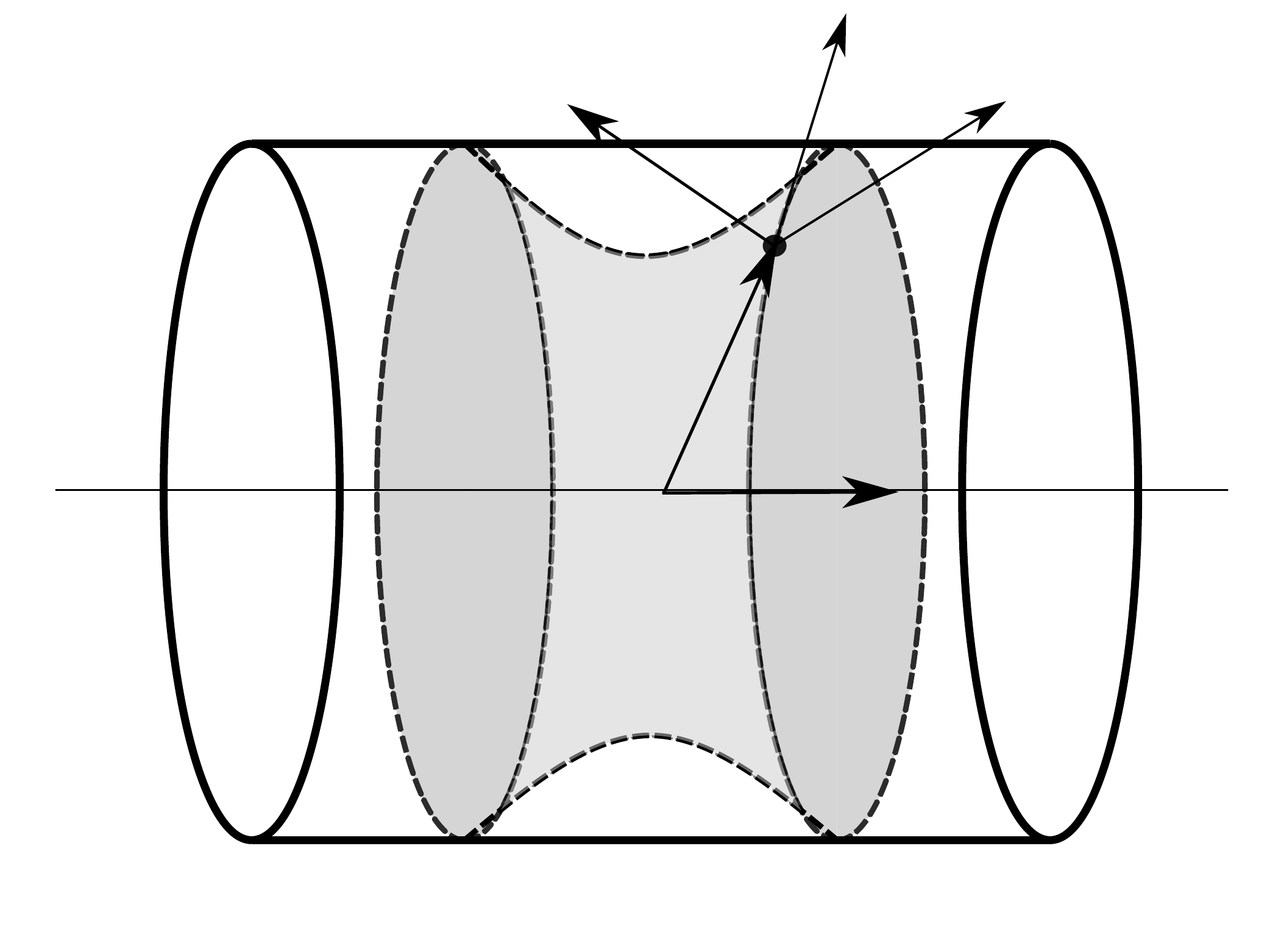
    \caption{A visual aid depicting the surface $\Sigma$ with boundary components $\partial_i\Sigma$ and support surface $\Omega$, along with the frame $\{\*T,\*n,\con\}$, position vector $\*r$, and axis of symmetry $\*e_1$.}
    \label{fig:genlsetup}
\end{figure}

Let $\*T$ denote a choice of a unit tangential vector field along $\partial \Sigma$. At each point along this boundary, it follows that the position vector $\*r$, the co-normal vector $\con$, the normal vector $\*n$, and the constant vector $\*e=\*e_1$ are all co-planar, since all are perpendicular to $\*T$ (see Figure~\ref{fig:genlsetup} for an illustration).  Moreover, note that along $\partial\Sigma$ we also have $\tau_g = h(\*T,\con) = 0$, $\nabla_\con\langle \*e,\*n\rangle = -h(\con,\con)\langle\*e,\con\rangle$, and $h(\nabla f,\con) = h(\con,\con)\nabla_\con f$ for any smooth $f:\Sigma \to \mathbb{R}$.  Thus, the flux formula (\ref{eq:transflux}) becomes
\begin{align*}
& 0 = \int_{\partial \Sigma} (F_H+h(\*T,\*T)F_K)\nabla_\con \left\langle{\*e, \*n}\right\rangle \,ds \\
&\hspace{-0.5pc}- \int_{\partial \Sigma} \left\langle{\*e, \*n}\right\rangle(\nabla_\con F_H+h(\*T,\*T)\nabla_\con F_K)\,ds + \int_{\partial \Sigma}F\left\langle{\*e, \con}\right\rangle \,ds\\
&\hspace{-0.5pc}= \int_{\partial \Sigma} \left\langle{\*e,\Big(F-h(\con,\con)(F_H+h(\*T,\*T)F_K)\Big)\con -\Big(\nabla_\con F_H+h(\*T,\*T)\nabla_\con F_K\Big)\*n}\right\rangle\,ds.\\
\end{align*}
Similarly, (\ref{eq:scaleflux}) becomes
\begin{align*}
& \int_{\Sigma}(2F-HF_H-2KF_K)\,d\mu = \int_{\partial \Sigma} (F_H+h(\*T,\*T)F_K)\nabla_\con \left\langle{\*r,\*n}\right\rangle\, ds \\
&\hspace{-0.5pc}- \int_{\partial \Sigma} \left\langle{\*r,\*n}\right\rangle(\nabla_\con F_H+h(\*T,\*T)\nabla_\con F_K) \,ds + \int_{\partial \Sigma}F\left\langle{\*r, \con}\right\rangle\, ds\\
&\hspace{-0.5pc}= \int_{\partial \Sigma} \left\langle{\*r,\Big(F-h(\con,\con)(F_H+h(\*T,\*T)F_K)\Big)\con -\Big(\nabla_\con F_H+h(\*T,\*T)\nabla_\con F_K\Big)\*n}\right\rangle\,ds.
\end{align*}
Therefore, let
\begin{align*}
\*V&=\Big(F-h(\con,\con)(F_H+h(\*T,\*T)F_K)\Big)\con -\Big(\nabla_\con F_H+h(\*T,\*T)\nabla_\con F_K \Big)\*n ,\\
\ell_i &= |\partial_i \Sigma|.
\end{align*} 
Then, since the principal curvatures are constant along each component of the boundary, the calculations above imply
\begin{align}
\label{eq:rotsymflux1}
0 & = \sum_i \ell_i \left\langle{\*e, \*V|_{\partial_i \Sigma}}\right\rangle;\\
\label{eq:rotsymflux2}
\int_{\Sigma}(2F-HF_H-2KF_K)\,d\mu & = \sum_i \ell_i \left\langle{\*r|_{\partial_i \Sigma}, \*V|_{\partial_i \Sigma}}\right\rangle.
\end{align}
\begin{remark}
The above formulas are comparable with those in \cite[Proof of Theorem 3]{dall2013}. In that paper, $\Sigma$ satisfies an additional reflection symmetry. As a consequence, it is possible to truncate $\Sigma$ such that, for some $i$,  $\left\langle{\*r|_{\partial_i \Sigma}, \*V|_{\partial_i \Sigma}}\right\rangle=0$. Also, that article and \cite{vassilev2014,dall2008,deckelnick2009} show there are plenty of Willmore surfaces with rotational symmetry. 
\end{remark}

For the rest of this section, it is assumed that $\mathcal{W}$ is scaling invariant. In this case, recall that 
\[2F-HF_H-2KF_K=0.\]
Since $\tau_g = h(\*T,\con) \equiv 0$ on $\partial\Sigma$ (the boundary is a line of curvature), it follows from this invariance that $\*V|_{\partial_i \Sigma}$ reduces to

\begin{equation}\label{eq:Vonbndry}
    \*V|_{\partial_i \Sigma}=\frac{1}{2}\Big(\kappa_n-h(\con, \con)\Big)F_H\con -\Big(\nabla_\con F_H+\kappa_n\nabla_\con F_K \Big)\*n.
\end{equation}

The following result will be used repeatedly in the proof of our main theorems. 
\begin{lemma}\label{lem:Vis0}
Let $\Sigma$ be a rotationally symmetric $\mathcal{W}$-surface and suppose $\mathcal{W}$ is scaling invariant. The following are equivalent.
\begin{enumerate}
	\item  $\*V = \*0$ on at least one boundary component $\partial_i\Sigma$.
	\item $\*V\equiv \*0$ on $\partial\Sigma$.
	\item Either $\Sigma$ is spherical or  $F_H\equiv0$, $F_K=c$ for some constant c, and $F=cK$ on $\Sigma$.
\end{enumerate}
\end{lemma}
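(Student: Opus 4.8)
The plan is to prove the cycle $(2)\Rightarrow(1)\Rightarrow(3)\Rightarrow(2)$. The implication $(2)\Rightarrow(1)$ is immediate, since vanishing on all of $\partial\Sigma$ certainly gives vanishing on a single component. For $(3)\Rightarrow(2)$ I would substitute directly into the reduced expression (\ref{eq:Vonbndry}): if $\Sigma$ is spherical then $\kappa_n=h(\con,\con)$ kills the $\con$-term, while the constancy of $H$ and $K$ forces $\nabla_\con F_H=\nabla_\con F_K=0$ and kills the $\*n$-term; if instead $F_H\equiv0$, $F_K=c$, and $F=cK$, then every coefficient in (\ref{eq:Vonbndry}) carries a factor of $F_H$ or of $\nabla_\con F_H=\nabla_\con F_K=0$, so again $\*V\equiv\*0$. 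The whole content of the lemma is therefore the implication $(1)\Rightarrow(3)$.

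For this, the key idea is to manufacture two linearly independent conservation laws from the stress tensor $T$ of Theorem~\ref{thm:stress}. Since $\Sigma$ is a $\mathcal{W}$-surface we have $\mathrm{div}_g\,T=-W\*n$ with $W\equiv0$, so for the constant axis vector $\*e_1$ the tangential field $T\*e_1$ satisfies $\mathrm{div}_g(T\*e_1)=\langle\*e_1,\mathrm{div}_g\,T\rangle=0$. The second, crucial current comes from dilation: using $\nabla_\*v\*r=\*v$ and the product rule, $\mathrm{div}_g(T\*r)=\langle\*r,\mathrm{div}_g\,T\rangle+\mathrm{tr}_g(T|_{T\Sigma})$, and a short computation gives $\mathrm{tr}_g(T|_{T\Sigma})=F_K|h|^2-(F_H+HF_K)H+2F=2F-HF_H-2KF_K$, which vanishes precisely because $\mathcal{W}$ is scaling invariant. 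Hence $T\*r$ is divergence-free as well. This is exactly where scaling invariance enters in an essential (not merely cosmetic) way.

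Next I would integrate these two currents over the cylindrical region cut from $\Sigma$ between any two parallel cross-sections. The divergence theorem shows that the fluxes $\int\langle T\*e_1,\con\rangle\,ds=\ell\,\langle\*e_1,\*V\rangle$ and $\int\langle T\*r,\con\rangle\,ds=\ell\,\langle\*r,\*V\rangle$ are independent of the chosen cross-section (here $\ell$ is the circumference and $\*V$ is the reduced field (\ref{eq:Vonbndry}), every inner product being constant along a parallel by symmetry). If $\*V=\*0$ on one boundary component, both conserved fluxes vanish, so $\langle\*e_1,\*V\rangle=\langle\*r,\*V\rangle=0$ on every parallel. Since $\*V$ lies in the meridian plane $\mathrm{span}\{\con,\*n\}=\mathrm{span}\{\*e_1,\*e_\rho\}$ and $\*r=x\*e_1+\rho\*e_\rho$ with $\rho>0$ off the axis, the vectors $\*e_1$ and $\*r$ span that plane, forcing $\*V\equiv\*0$ on all of $\Sigma$. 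This already yields $(2)$ and feeds the final step.

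It remains to extract $(3)$ from $\*V\equiv\*0$. Because meridians and parallels are lines of curvature on a surface of revolution, $\kappa_n=h(\*T,\*T)$ and $h(\con,\con)$ are the two principal curvatures everywhere, so $\*V\equiv\*0$ is equivalent to the pointwise pair $(\kappa_n-h(\con,\con))F_H=0$ and $\nabla_\con F_H+\kappa_n\nabla_\con F_K=0$. The first gives a pointwise dichotomy: at each point $\Sigma$ is umbilic or $F_H=0$. I expect the main obstacle to be promoting this pointwise alternative to a global one and excluding a mixed surface; this is handled by continuity. On any connected component of the open set $\{F_H\neq0\}$ the surface is umbilic, hence a piece of a round sphere (the planar case is excluded since scaling invariance gives $F_H(0,0)=0$), on which $H$, $K$, and therefore $F_H$ are constant; were such a component to have boundary inside $\Sigma$, continuity of $F_H$ would make it simultaneously nonzero and zero there. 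Thus $\{F_H\neq0\}$ is either empty or all of $\Sigma$, giving a sphere (case~1) or $F_H\equiv0$. In the latter case $\nabla_\con F_H=0$, and since $\kappa_n\neq0$ for a genuine surface of revolution ($\rho>0$), the second equation forces $\nabla_\con F_K=0$; rotational symmetry then makes $F_K$ constant, and scaling invariance ($2F-HF_H-2KF_K=0$ with $F_H\equiv0$) gives $F=KF_K=cK$, which is case~2. The delicate points are therefore the essential use of the dilation current to produce a second independent flux, and the continuity argument that globalizes the umbilic-versus-$F_H=0$ dichotomy.
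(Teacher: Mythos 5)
Your overall architecture is sound, and in substance it runs parallel to the paper's: your conserved currents $T\*e_1$ and $T\*r$ (whose tangential trace you correctly compute as $2F-HF_H-2KF_K$, vanishing by scaling invariance) are exactly the translation and dilation flux formulas of Corollary~\ref{cor:fluxformula} applied to truncated annuli, which is how the paper propagates $\*V=\*0$ from one component to all parallels; your open-closed continuity argument for globalizing the umbilic-versus-$F_H=0$ dichotomy is a clean substitute for the paper's maximal-spherical-submanifold argument, and your use of both pairings $\langle\*e_1,\*V\rangle$ and $\langle\*r,\*V\rangle$ neatly avoids the paper's re-choice of origin. (One hygiene point: the spanning argument gives $\*V=\*0$ only off the axis, since $\*r\parallel\*e_1$ at axis points; append a continuity remark.)

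There is, however, one genuine error, in the final step of Case 2. The claim that ``$\kappa_n\neq 0$ for a genuine surface of revolution ($\rho>0$)'' is false. Along a parallel of radius $\rho$, the curvature vector of the circle points at the axis, so $\kappa_n = h(\*T,\*T) = -\rho^{-1}\langle \*e_\rho, \*n\rangle$, which vanishes precisely where $\*n$ is parallel to the axis; this happens on open sets, e.g.\ on any planar annulus perpendicular to the axis. This is not a pathological configuration: the paper's Remark~\ref{example1} ($F=4K$ with $\Sigma$ flat in a neighborhood of its boundary) realizes exactly this situation in the equality case of the lemma, so your argument fails on instances the lemma is designed to capture. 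On such a set the equation $\kappa_n \nabla_\con F_K = 0$ carries no information about $\nabla_\con F_K$, and your deduction that $F_K$ is constant collapses there. The repair is the paper's: if $h(\*T,\*T)=0$ on a rotationally symmetric open set, then $\*n=\pm\*e$ there, so the set is planar, $H=K=0$, and $F_K = F_K(0,0)$ is constant on it for this independent reason; combining this with your argument on $\{\kappa_n\neq 0\}$ and continuity across the interface gives $\nabla F_K\equiv 0$, hence $F_K\equiv c$ on connected $\Sigma$, after which $F=cK$ follows from scaling invariance exactly as you state. With that subcase inserted, your proof is complete.
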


\begin{proof}
First, we show that $(1)\rightarrow (2)$. Without loss of generality, we can assume that $\Sigma$ has at most 2 boundary components. If $\partial \Sigma$ has one component, then the statement follows vacuously. If $\partial \Sigma$ has two connected components, assume that $\*V_j\neq \*0$ for $j\neq i$. Then, we can choose the origin along the $x$-axis, in which case $\left\langle{\*r|_{\partial_j \Sigma}, \*V|_{\partial_j \Sigma}}\right\rangle\neq 0$ for $j\neq i$. But this contradicts equation (\ref{eq:rotsymflux2}),
\[0 = \sum_k \ell_k \left\langle{\*r|_{\partial_k \Sigma}, \*V|_{\partial_k \Sigma}}\right\rangle, \] so we must have $\*V\equiv\*0$ on $\partial\Sigma$ in this case as well.

Next, we'll show $(2)\rightarrow (3)$. Observe that (\ref{eq:rotsymflux1}) and (\ref{eq:rotsymflux2}) hold for any truncated surface. As a consequence, the above argument can be repeated for a sequence of surfaces whose boundaries exhaust $\Sigma$ to obtain that 
$\*V\equiv \*0$. Thus,  on the entire surface $\Sigma$,
\begin{align*}
0 &= (h(\*T,\*T)-h(\con, \con))F_H,\\
0 &= \nabla_\con F_H+h(\*T,\*T)\nabla_\con F_K, 
\end{align*}
where $\con = \*T\times\*n$ is the unique extension of the outward co-normal field to the interior of $\Sigma$ (which exists since $\Sigma$ is rotational).
It follows by continuity that for any connected and rotationally symmetric submanifold $\Sigma_0\subset\Sigma$, either $F_H=0$ or $h(\*T,\*T)=h(\con,\con)$ holds.  We consider two possible cases:

\textit{Case 1:} $h(\*T,\*T)=h(\con,\con)$ and $\Sigma_0$ is totally umbilical.  By rotational symmetry, $\Sigma_0\subset\Sigma$ must be spherical, hence both $H$ and $K$ are constant on $\Sigma_0$. Moreover, this implies that $F_H$ is constant on $\Sigma_0$ as well.  On the other hand, if $\Sigma$ is not entirely spherical, then taking $\Sigma_0$ to be the maximally connected spherical submanifold contained in $\Sigma$ (guaranteed by Zorn's Lemma) we conclude there is a rotationally symmetric $\Sigma_1\subset\Sigma$ which is nonspherical and has nontrivial intersection with $\Sigma_0$.  
This is only possible if $F_H\equiv 0$ everywhere on $\Sigma$.



\textit{Case 2:} $F_H\equiv 0$ on $\Sigma$. Then, the second equation implies that 
\[h(\*T,\*T)\nabla_\con F_K=0.\]
Again, it must hold for any connected and rotationally symmetric $\Sigma_0\subset\Sigma$ that either $h(\*T,\*T)=0$ or $\nabla_\con F_K=0$.  Suppose $h(\*T,\*T)=0$ on $\Sigma_0$. By rotational symmetry, $\nabla_\*T \*T$ is perpendicular to both $\*e, \*T$. Thus, $h(\*T,\*T)=0$ if and only if $\*n\parallel \*e$. As a consequence, $\*n=\pm \*e$, so $h(\con, \con)=0$ and $\Sigma_0$ is flat.  Hence, $F_K$ is constant on $\Sigma_0$.  On the other hand, if $\nabla_\con F_K=0$ on $\Sigma_0$, then rotational symmetry implies that $F_K$ is constant on $\Sigma_0$ also since $\nabla_\*T F_K = 0$.  Therefore, in either case it follows that $F_H\equiv 0$ and $F_K$ is constant, implying $F=cK$ on $\Sigma_0$.  Taking an overlapping sequence of rotationally symmetric subsurfaces which exhaust $\Sigma$ then yields the conclusion.

Finally, consider the implication $(3)\rightarrow (1)$.  This is clear from the expression of  $\*V|_{\partial_i\Sigma}$ in (\ref{eq:Vonbndry}) when $F_H\equiv 0, F_K=c\in\mathbb{R}$.  Also, $\Sigma$ spherical implies $\kappa_n = h(\con,\con)$ as well as $H,K$ constant, so that the implication again follows from (\ref{eq:Vonbndry}).
\end{proof}

\begin{lemma}\label{lem:neworigin}
Let $\mathcal{W}$ be scaling invariant and let $\Sigma$ be a rotationally symmetric $\mathcal{W}$-surface.  Denote the unit vector aligned with the axis of rotation by $\*e$.  Either $\left\langle{\*e, \*V}\right\rangle=0$ holds on $\Sigma$ or it is possible to choose an origin such that $\left\langle{\*r,\*V}\right\rangle=0$ on $\Sigma$.
\end{lemma}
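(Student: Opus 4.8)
The plan is to distill from the flux identities two quantities that are \emph{constant} along the profile of $\Sigma$, and then read off the dichotomy by a one-line algebraic manipulation. Parametrize the generating curve by a profile parameter $t$, write $\ell(t)$ for the length of the circle of revolution at parameter $t$, and recall that $\*V$ extends to all of $\Sigma$ through the rotational coframe $\{\*T,\*n,\con\}$. By rotational symmetry both $\langle\*e,\*V\rangle$ and $\langle\*r,\*V\rangle$ descend to functions of $t$ alone (the $\theta$-dependence of $\*r$ and of $\con,\*n$ cancels in each pairing). Note also that $\*V$ is origin-independent, being assembled from $F$, its derivatives, and $h$, whereas $\*r$ is not.

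The first and principal step is to apply the flux identities (\ref{eq:rotsymflux1}) and (\ref{eq:rotsymflux2}), both of which hold for \emph{any} truncation, to the annular piece cut between two parameter values $a<b$. Since $\mathcal{W}$ is scaling invariant, $2F-HF_H-2KF_K\equiv 0$, so the left-hand side of (\ref{eq:rotsymflux2}) vanishes and each truncation produces a boundary identity receiving contributions only from the circles at $a$ and at $b$. The key observation is that $\*V$ is \emph{odd} under reversal of the conormal: in (\ref{eq:Vonbndry}) the substitution $\con\mapsto-\con$ leaves $\kappa_n=h(\*T,\*T)$ and $h(\con,\con)$ unchanged, sends $\nabla_\con\mapsto-\nabla_\con$, and flips $\con$, whence $\*V\mapsto-\*V$. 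Because the outward conormal of the truncated piece points in the $+t$ direction on the upper circle and the $-t$ direction on the lower one, the two boundary terms enter with opposite signs, giving
\begin{align*}
\ell(b)\langle\*e,\*V\rangle\big|_{b} &= \ell(a)\langle\*e,\*V\rangle\big|_{a}, \\
\ell(b)\langle\*r,\*V\rangle\big|_{b} &= \ell(a)\langle\*r,\*V\rangle\big|_{a},
\end{align*}
for all $a<b$. Hence $t\mapsto\ell(t)\langle\*e,\*V\rangle\equiv c_1$ and $t\mapsto\ell(t)\langle\*r,\*V\rangle\equiv c_2$ are both constant on $\Sigma$.

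The dichotomy is then immediate. If $c_1=0$, then $\langle\*e,\*V\rangle=c_1/\ell(t)=0$ wherever $\ell(t)>0$, hence identically on $\Sigma$ by continuity, which is the first alternative. If $c_1\neq 0$, translate the origin along the axis by $s\*e$ with $s=c_2/c_1$; since $\*V$ is unaffected while $\*r\mapsto\*r-s\*e$, one finds $\langle\*r-s\*e,\*V\rangle=(c_2-sc_1)/\ell(t)=0$, the second alternative. An axial shift is the only admissible choice, as a transverse shift would make $\langle\*r,\*V\rangle$ genuinely $\theta$-dependent.

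The step I expect to require the most care is the orientation bookkeeping: one must verify precisely that the outward conormal reverses between the two truncating circles and that $\*V$ is odd under this reversal, since it is exactly this sign that converts the flux identities into the telescoping (and hence constancy) statements above. Poles on the axis, where $\ell(t)\to 0$, need no separate treatment: the conserved quantities are then forced to vanish and the resulting identities extend across the poles by continuity.
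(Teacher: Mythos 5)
Your argument is correct, and it runs on the same engine as the paper's proof --- the flux identities (\ref{eq:rotsymflux1}) and (\ref{eq:rotsymflux2}) applied to arbitrary truncations, with scaling invariance annihilating the bulk term in (\ref{eq:rotsymflux2}) --- but the packaging is genuinely different. The paper never states your two conservation laws: it supposes $\langle \*e,\*V\rangle\neq 0$ on a single circle, constructs the origin there by planar geometry (the planes through each boundary point spanned by $\*T$ and $\*T\times\*V$ meet the axis in a common point precisely when $\langle\*e,\*V\rangle\neq 0$, which is your solvability condition $s\langle\*e,\*V\rangle=\langle\*r,\*V\rangle$), and then invokes only the dilation flux once to propagate $\langle\*r,\*V\rangle=0$ from that circle to all of $\Sigma$. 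You instead bring in the translation flux as well, obtaining $\ell(t)\langle\*e,\*V\rangle\equiv c_1$, and replace the geometric construction by the algebraic shift $s=c_2/c_1$; your constancy of $\ell(t)\langle\*r,\*V\rangle$ is exactly the paper's propagation step in disguise, with the constant pinned to zero. Your orientation bookkeeping is sound: $\*V$ is indeed odd under $\con\mapsto-\con$ (this holds for the general form of $\*V$, not only the reduced expression (\ref{eq:Vonbndry})), and the outward conormals of an annular truncation do point oppositely along the profile, so the telescoping identities follow. What your version buys is that the sign analysis --- left implicit in the paper's remark that the flux formulas ``hold for any truncated surface'' --- is made explicit and checked, the dichotomy falls out of the single alternative $c_1=0$ versus $c_1\neq 0$ with the first case giving $\langle\*e,\*V\rangle\equiv 0$ on all of $\Sigma$ at once, and circles degenerating at poles are handled uniformly; what the paper's version buys is the geometric meaning of the new origin and economy, since it never needs the translation flux in this lemma. (Your closing remark that an axial shift is the ``only admissible'' choice is unnecessary for the statement, which asserts only existence of a suitable origin, but it is harmless.)
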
 
\begin{proof}
Suppose there is some boundary component $\partial_i \Sigma$ on which $\left\langle{\*e, \*V}\right\rangle\neq 0$. Since $\*V$ is a constant combination of $\con,\*n$ on $\partial_i\Sigma$, it is then possible using planar geometry (see e.g. Figure~\ref{fig:neworigin}) to choose an origin on the $x$-axis such that $\left\langle{\*r,\*V}\right\rangle=0$ there.  More precisely, because $\partial_i\Sigma$ is circular, the planes spanned by $\*T$ and $\*T\times\*V$ at each point will intersect the $x$-axis in a common point (when $\*V \nparallel \con$), which serves as the new origin of $\*r$ (when $\*V \parallel \con$ the new origin can be placed at the center of $\partial_i\Sigma$).  Applying the flux formula corresponding to dilation now leads to $\left\langle{\*r,\*V}\right\rangle=0$ on $\Sigma$.   
\end{proof}

\begin{figure}[htb!]
    \centering
    \def\svgwidth{0.5\linewidth}
    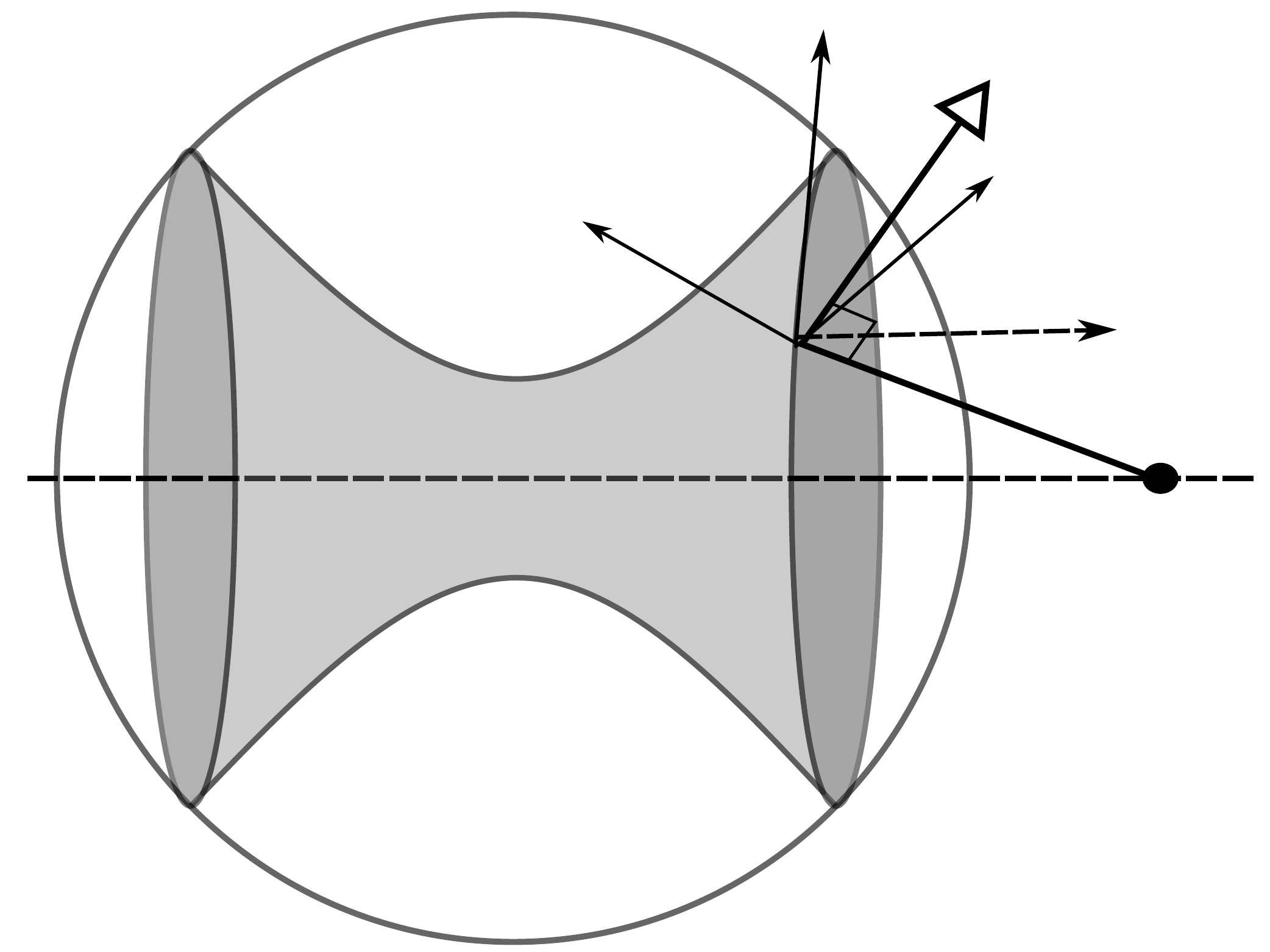
    \caption{An illustration depicting the new choice of origin in Lemma~\ref{lem:neworigin}.  Note that $\*r$ is orthogonal to $\*V$, as desired.}
    \label{fig:neworigin}
\end{figure}

Next, we consider the surfaces with free boundary in addition to rotational symmetry. That is, suppose $\Sigma, \Omega \subset \mathbb{R}^3$ share a common axis of rotational symmetry and satisfy the system of equations (\ref{eq:freebdry}). Again, without loss of generality, it may be assumed that the axis of symmetry is the $x$-axis, and $\Sigma$ has at most 2 boundary components. Furthermore, due to the shared symmetry, each connected component $\partial_i \Sigma$ of $\partial \Sigma$ is circular around the $x$-axis. Moreover, it follows that $\Sigma$ intersects $\Omega$ at a constant angle. 
Consequentially, $\left\langle{\*n, \*v}\right\rangle$ is constant along $\partial \Sigma$, where $\*v$ is an appropriate normal vector to $\Omega$.  This leads to the following well known observation illustrated in Figure~\ref{fig:twoplanes}.
\begin{lemma}
	\label{lem:curvatureline}
	Suppose that $\Sigma$ is a surface which has free boundary with respect to a support surface $\Omega$.  If $\Sigma$ intersects $\Omega$ at a constant nonzero angle, then $\partial \Sigma\subset \Omega$ is formed by lines of curvature if and only if so is $\partial \Sigma \subset \Sigma$.  
\end{lemma}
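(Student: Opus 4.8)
The plan is to recognize this as a version of Joachimsthal's classical theorem and to prove it by differentiating the constant-angle relation along the common boundary curve. I reformulate the notion of a line of curvature through the geodesic torsion: a curve $\Gamma$ lying in a surface with unit normal $N$ and unit tangent $\*T$ is a line of curvature precisely when its geodesic torsion vanishes, equivalently (by Rodrigues' formula) when $D_\*T N$ is parallel to $\*T$. Writing $\Gamma = \partial\Sigma$ with unit tangent $\*T$, I denote by $\*n$ the unit normal of $\Sigma$, by $\*v$ the unit normal of $\Omega$, and by $\con_\Sigma = \*T\times\*n$ and $\con_\Omega = \*T\times\*v$ the corresponding co-normals, so that $\{\*n,\con_\Sigma\}$ and $\{\*v,\con_\Omega\}$ are orthonormal bases of the plane $\*T^\perp$ normal to $\Gamma$. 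In this notation the claim is that $\tau_g^\Sigma = h_\Sigma(\*T,\con_\Sigma)$ vanishes along $\Gamma$ if and only if $\tau_g^\Omega = h_\Omega(\*T,\con_\Omega)$ does.

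First I would record the two decompositions $D_\*T\*n = -\kappa_n^\Sigma\,\*T - \tau_g^\Sigma\,\con_\Sigma$ and $D_\*T\*v = -\kappa_n^\Omega\,\*T - \tau_g^\Omega\,\con_\Omega$, which follow from $|\*n| = |\*v| = 1$ together with the definitions of $\kappa_n$ and $\tau_g$ in Section 2. The hypothesis that $\Sigma$ meets $\Omega$ at a constant angle $\theta$ means $\langle\*n,\*v\rangle = \cos\theta$ is constant along $\Gamma$; differentiating in the $\*T$-direction gives $\langle D_\*T\*n,\*v\rangle + \langle\*n,D_\*T\*v\rangle = 0$. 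Substituting the two decompositions and using $\langle\*T,\*v\rangle = \langle\*T,\*n\rangle = 0$ (both surfaces contain $\Gamma$, so $\*T$ is tangent to each) collapses this to $\tau_g^\Sigma\,\langle\con_\Sigma,\*v\rangle + \tau_g^\Omega\,\langle\*n,\con_\Omega\rangle = 0$.

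The key step is to evaluate the two remaining inner products inside the plane $\*T^\perp$. Since $\con_\Sigma$ and $\con_\Omega$ are obtained from $\*n$ and $\*v$ by the same cross product with the fixed vector $\*T$, the orthonormal frame $\{\*v,\con_\Omega\}$ is the rotation of $\{\*n,\con_\Sigma\}$ by the angle $\theta$ within $\*T^\perp$; explicitly $\*v = \cos\theta\,\*n + \sin\theta\,\con_\Sigma$ and $\con_\Omega = -\sin\theta\,\*n + \cos\theta\,\con_\Sigma$. Hence $\langle\con_\Sigma,\*v\rangle = \sin\theta$ while $\langle\*n,\con_\Omega\rangle = -\sin\theta$, and the displayed identity becomes $\sin\theta\,(\tau_g^\Sigma - \tau_g^\Omega) = 0$. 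Because the intersection angle is constant and nonzero, $\sin\theta \neq 0$, so $\tau_g^\Sigma = \tau_g^\Omega$ along $\Gamma$; in particular one torsion vanishes exactly when the other does, which is the assertion.

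I expect the main obstacle to be bookkeeping the orientations in the last step: one must verify that a single rotation by $\theta$ carries $\*n\mapsto\*v$ and $\con_\Sigma\mapsto\con_\Omega$ simultaneously, so that the two inner products come out as $+\sin\theta$ and $-\sin\theta$ and produce the difference $\tau_g^\Sigma - \tau_g^\Omega$ rather than an uncontrolled sum (were the signs to align, the argument would yield no information). It is also worth flagging where the nonzero-angle hypothesis enters: if $\sin\theta = 0$ the surfaces are tangent along $\Gamma$ and the relation degenerates, which is exactly the case in which the line-of-curvature property need not transfer. The remaining ingredients — the decomposition of $D_\*T\*n$ and the geodesic-torsion characterization of lines of curvature — are standard and can simply be cited.
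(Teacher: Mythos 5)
Your proof is correct and takes essentially the same route as the paper: both arguments differentiate the constant-angle relation $\langle \*n, \*v\rangle$ along $\*T$, expand $D_\*T\*n$ and $D_\*T\*v$ through the second fundamental forms of $\Sigma$ and $\Omega$, and use the nonzero intersection angle to make the coefficients $\langle \con_\Sigma, \*v\rangle$ and $\langle \*n, \con_\Omega\rangle$ nonvanishing. Your explicit rotation computation within $\*T^{\perp}$ even sharpens the paper's conclusion slightly, giving the identity $\tau_g^{\Sigma} = \tau_g^{\Omega}$ rather than merely the equivalence of their vanishing.
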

\begin{proof}
	Let $\*T$ be a unit vector field tangent to $\partial \Sigma$. Since $\left\langle{\*n, \*v}\right\rangle$ is constant, we have
	\begin{align*}
	0 &= \nabla_\*T \left\langle{\*n, \*v}\right\rangle\\
	&= \left\langle{\nabla_\*T \*n, \*v}\right\rangle+\left\langle{\*n, \nabla_\*T \*v}\right\rangle\\
	&= h^{\Sigma}(\*T,\con)\left\langle{\con, \*v}\right\rangle+h^{\Omega}(\bm{\zeta}, \*T)\left\langle{\*n, \bm{\zeta}}\right\rangle, 
	\end{align*}
	where $\bm{\zeta}$ is an appropriate unit co-normal vector to $\partial \Sigma\subset \Omega$. Since $\{\*n, \con\}$ and $\{\*v, \bm{\zeta}\}$ are pairs of perpendicular vectors in the same plane and the angle between $\*n$ and $\*v$ is nonzero, the result follows. 
\end{proof}

\begin{figure}[htb!]
    \centering
    \def\svgwidth{0.5\linewidth}
    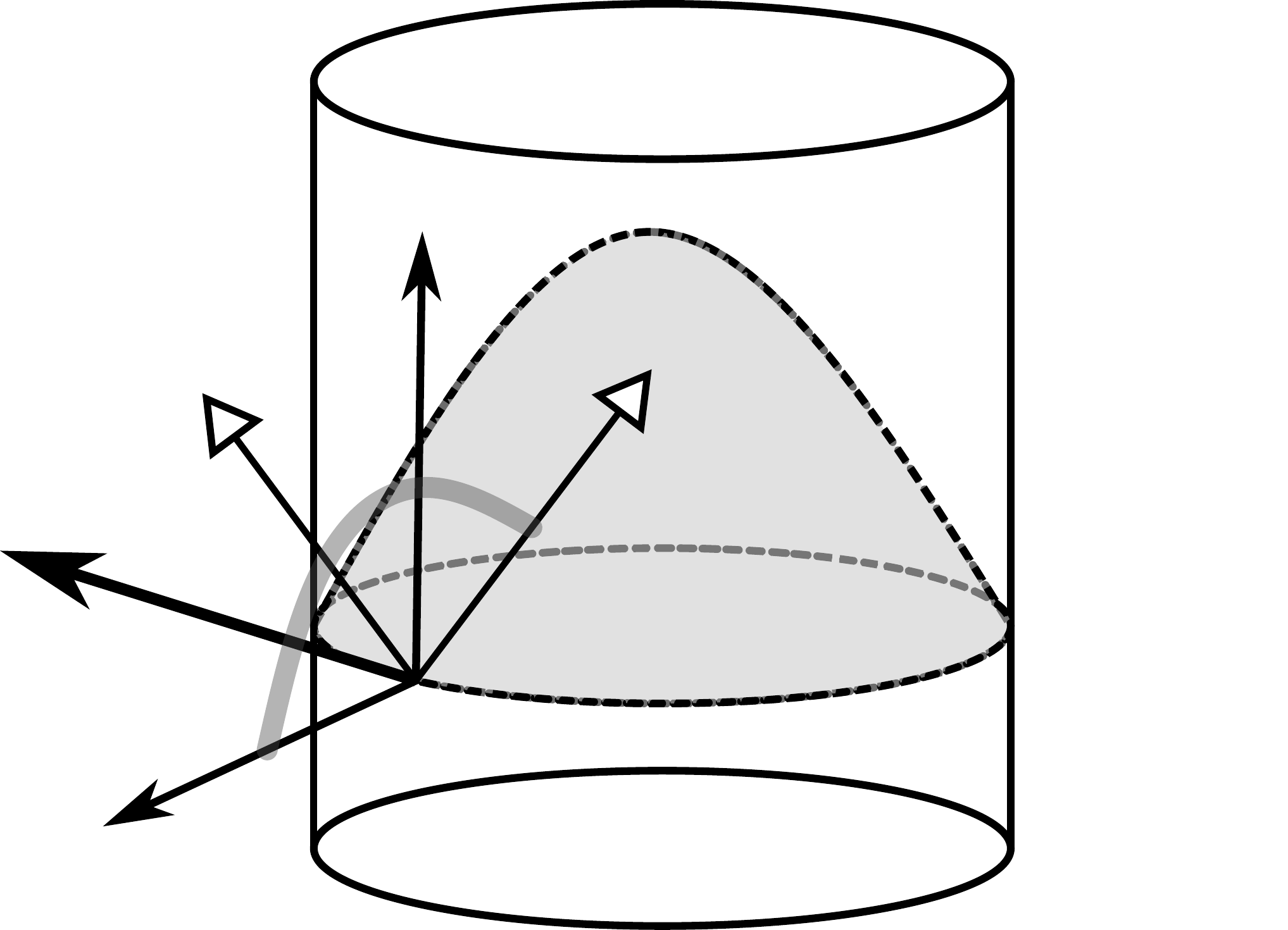
    \caption{An illustration depicting  Lemma~\ref{lem:curvatureline}. Note that $\*v,\*n,\bm{\zeta},\con$ are all co-planar.}
    \label{fig:twoplanes}
\end{figure}

As a consequence of this, $\partial_i \Sigma$ is a line of curvature on both $\Sigma$ and $\Omega$. By the calculations above Lemma~\ref{lem:Vis0}, $\Sigma$ is a rotationally symmetric and scale-invariant $\mathcal{W}$-surface with free boundary with respect to $\Omega$ if and only if the following hold:
\begin{align}
\*V &= \frac{1}{2}\Big(h(\*T,\*T)-h(\con, \con)\Big)F_H\con -\Big(\nabla_\con F_H+h(\*T,\*T)\nabla_\con F_K \Big)\*n,\\
\label{eq:rotflux1}
0 & = \sum_i \ell_i \left\langle{\*e, \*V_i}\right\rangle,\\
\label{eq:rotflux2}
0 & = \sum_i \ell_i \left\langle{\*r_i, \*V_i}\right\rangle,\\
0 &= \Delta F_H+ H\Delta F_K-\left\langle{h, \He F_K}\right\rangle+F_H|h|^2+HKF_K-HF, \text{~~on $\Sigma$},\\
\label{eq:rotfree1}
0 &= F_H+\kappa_nF_K \text{~~ on $\partial \Sigma$},\\
\label{eq:rotfree2}
\*v & \parallel \*V \,\,\mathrm{or}\,\, \*V=\*0 \text{~~ on $\partial \Sigma$}.
\end{align}
Note that the last equation follows from arguments similar to those in the proof of  Proposition~\ref{prop:freebndry}.  An immediate consequence of this system is the following.
\begin{proposition}\label{prop:onebdry}
Suppose that $\mathcal{W}$ is scaling invariant and $\Sigma$ is a rotationally symmetric free-boundary $\mathcal{W}$-surface with exactly one boundary component.  Then, one of the following holds:
\begin{enumerate}
\item $\Sigma$ is spherical and $F\equiv 0$.
\item $F_H\equiv 0$, $F_K$ is constant. 
\end{enumerate}
\end{proposition}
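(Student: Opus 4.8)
The plan is to use the single-boundary hypothesis to collapse the rotational flux identities into pointwise orthogonality conditions, force $\*V = \*0$ on the boundary circle, and then invoke Lemma~\ref{lem:Vis0}, upgrading its spherical alternative afterward. First I would note that $\partial\Sigma = \partial_1\Sigma$ is a single circle with $\ell_1 = |\partial_1\Sigma| > 0$, so the sums in (\ref{eq:rotflux1}) and (\ref{eq:rotflux2}) each contain one term. Placing the origin on the axis of symmetry and using that $\mathcal{W}$ is scale invariant (so the left-hand side of the scaling flux (\ref{eq:rotflux2}) vanishes), these reduce to the two scalar relations $\langle\*e,\*V\rangle = 0$ and $\langle\*r,\*V\rangle = 0$ along $\partial_1\Sigma$.

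Next I would show $\*V|_{\partial_1\Sigma} = \*0$. From (\ref{eq:Vonbndry}), $\*V$ is a combination of $\con$ and $\*n$, hence lies in the plane orthogonal to $\*T$; because $\partial_1\Sigma$ is a circle of positive radius about the axis, this plane is also spanned by the axis direction $\*e$ and the radial direction. The relation $\langle\*e,\*V\rangle = 0$ then makes $\*V$ purely radial, after which $\langle\*r,\*V\rangle = 0$ — with $\*r$ carrying a strictly positive radial component, since the origin sits on the axis — forces the radial part to vanish as well. Hence $\*V \equiv \*0$ on $\partial_1\Sigma$.

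With $\*V$ vanishing on a boundary component, Lemma~\ref{lem:Vis0} yields that either $\Sigma$ is spherical, or $F_H \equiv 0$ and $F_K$ is constant; the latter is exactly alternative (2). To settle the spherical alternative I would additionally establish $F \equiv 0$: on a spherical $\Sigma$ the principal curvatures equal a common constant $\kappa$, so $H = 2\kappa$, $K = \kappa^2$, $\kappa_n = \kappa$, and $F_H, F_K$ are constant. The free-boundary condition (\ref{eq:rotfree1}) then reads $F_H + \kappa F_K = 0$, while scale invariance supplies the Euler identity $2F = HF_H + 2KF_K = 2\kappa F_H + 2\kappa^2 F_K$; combining these gives $F = \kappa(F_H + \kappa F_K) = 0$, which is alternative (1). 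The degenerate flat case $\kappa = 0$ gives $F = F(0,0) = 0$ at once.

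The step I expect to be the main obstacle is precisely this last one. On a round sphere the vanishing of $\*V$ is automatic and carries no information, so the conclusion $F \equiv 0$ cannot be read off from the flux identities; it must instead be extracted by pairing the free-boundary relation $F_H + \kappa_n F_K = 0$ with the scale-invariance Euler identity. By contrast, the flux collapse and the coplanarity/positive-radius argument are routine consequences of the reductions already carried out above Lemma~\ref{lem:Vis0}.
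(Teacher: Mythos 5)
Your proposal is correct and takes essentially the same route as the paper: with one boundary component, the flux identities (\ref{eq:rotflux1})--(\ref{eq:rotflux2}) give $\left\langle \*e,\*V\right\rangle=\left\langle \*r,\*V\right\rangle=0$, and since $\*e$ and $\*r$ are non-parallel vectors in the plane orthogonal to $\*T$ (your axis/radial decomposition is just this), $\*V\equiv\*0$, so Lemma~\ref{lem:Vis0} applies. Your treatment of the spherical case is also exactly the paper's: it multiplies (\ref{eq:rotfree1}) by $\kappa_n$ and uses $K=H^2/4=\kappa_n^2$ together with the scaling identity $2F=HF_H+2KF_K$ to get $F=0$ on $\partial\Sigma$, hence on all of $\Sigma$ by constancy of $H$ and $K$.
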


\begin{proof}
Since there is only one boundary component, by (\ref{eq:rotflux1}) and (\ref{eq:rotflux2}), we have
\[ 0=\left\langle{\*e, \*V}\right\rangle=\left\langle{\*r, \*V}\right\rangle.\]
Moreover, $\*e$ and $\*r$ are not parallel, so $\*V\equiv 0$. By Lemma \ref{lem:Vis0}, either $\Sigma$ is spherical or $F_H\equiv 0$ and $F_K$ is constant. In the former case, note that $K = H^2/4 = \kappa_n^2$ on $\partial\Sigma$, so that multiplying equation (\ref{eq:rotfree1}) by $\kappa_n$ and using scaling invariance yields that $F=0$ on $\partial \Sigma$.  Moreover, since $\Sigma$ is spherical, $H$ and $K$ are constant and hence $F$ is constantly equal to zero on $\Sigma$.
\end{proof}


It is now possible to eliminate the dependence on the number of boundary components, hence establishing Theorem~\ref{thm:main1}.

\begin{proof}[Proof of Theorem~\ref{thm:main1}]
Without loss of generality, suppose that $\Sigma$ has at most 2 boundary components. If $\Sigma$ has only one component, then the result follows from Proposition \ref{prop:onebdry}. So, suppose that $\Sigma$ has 2 boundary components. 

Let $\Pi$ be any plane containing the $x$-axis. Since $\Omega$ is strictly convex, its intersection with $\Pi$ is a convex simple curve $\gamma$. It follows that the normal vector map of $\gamma$ in $\Pi$, from $\gamma$ to the unit circle, is one-to-one and onto \cite[Chapter~6]{gray2006}. Furthermore, each boundary component of $\partial\Sigma$ intersects $\Pi$ at two points whose normal vectors are symmetrical over the $x$-axis. Since $\Sigma$ has 2 boundary components, its intersection with $\Pi$ consists of 4 points, 2 on each side of the $x$-axis. Now, apply Lemma~\ref{lem:neworigin} to choose the origin on the $x$-axis such that $\left\langle{\*r|_{\partial_i \Sigma}, \*V|_{\partial_i \Sigma}}\right\rangle=0$ for some $i$.  If $\*V|_{\partial_i \Sigma}=\*0$, then by Lemma \ref{lem:Vis0} either $\Sigma$ is spherical or $F_H\equiv 0$ and $F_K$ is constant.  Therefore, the conclusion follows from an argument similar to the proof of Proposition~\ref{prop:onebdry}.  

Conversely, suppose $\*V|_{\partial_i \Sigma}\neq 0$.  Since $\*v \parallel \*V$ on $\partial_i\Sigma$ from \eqref{eq:rotfree2}, it follows that $\left\langle{\*r|_{\partial_i \Sigma}, \*v|_{\partial_i \Sigma}}\right\rangle=0$. Moreover, $\gamma$ is strictly convex, therefore the strictly convex region it bounds lies entirely on one side of the tangent line directed with $\*r|_{\partial_i \Sigma}$.  As a consequence, $\left\langle{\*r|_{\partial_j \Sigma}, \*v|_{\partial_j \Sigma}}\right\rangle\neq 0$ for $j\neq i$. By the flux formula (\ref{eq:rotflux2}) and the free boundary condition (\ref{eq:rotfree2}), it follows that $\*v$ cannot be parallel to $\*V$ along $\partial_j\Sigma$, so that
\[\*V|_{\partial_j \Sigma}=\*0.\]
The rest now follows from the preceding argument.
\end{proof}
\begin{remark}\label{example1} The first case could happen if, for example, $F=(H^2-4K)\frac{K}{H^2}$. The second case can happen if, for example, $F=4K$ and $\Sigma$ is a $C^2$ surface which is rotationally symmetric and flat in a neighborhood of its boundary.  
\end{remark}


With this in place, Theorem~\ref{thm:main1} will now be used to establish some interesting results about the free-boundary critical points of the conformal Willmore functional.  Such surfaces $\Sigma\subset\mathbb{R}^3$ are known as conformal Willmore surfaces.

\begin{corollary}
Let  $\Sigma\subset \mathbb{R}^3$ be an immersed conformal Willmore surface which has free boundary with respect to $\Omega$. Suppose that $\Sigma$ and $\Omega$ share a common axis of rotational symmetry, and $\Omega$ is strictly convex. Then $\Sigma$ must be either spherical or flat. 
\end{corollary}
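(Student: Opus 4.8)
The plan is to read this off Theorem~\ref{thm:main1}, since the conformal Willmore functional is precisely the case $F = H^2 - 4K$. First I would check the hypotheses: this $F$ is scaling invariant, because $F(tH,t^2K) = t^2H^2 - 4t^2K = t^2F(H,K)$, and its derivatives are $F_H = 2H$ and $F_K = -4$. As $\Sigma$ is a rotationally symmetric free-boundary $\mathcal{W}$-surface and $\Omega$ is strictly convex with the shared axis, Theorem~\ref{thm:main1} applies and forces one of its two alternatives. Alternative (1) gives that $\Sigma$ is spherical, which is one of the two desired conclusions outright (and is consistent, since $H^2 = 4K$ at umbilic points yields $F\equiv 0$). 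So the real content lies in alternative (2).

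In alternative (2) one has $F_H \equiv 0$ and $F_K$ constant. Since $F_H = 2H$, this says $H \equiv 0$, so $\Sigma$ is a minimal surface of revolution, while $F_K = -4$ is automatically constant. By the classical classification of minimal surfaces of revolution, every connected such surface is a piece of a plane or a piece of a catenoid, so it remains to exclude the catenoid. Here I would invoke the boundary condition \eqref{eq:rotfree1}, namely $F_H + \kappa_n F_K = 0$, which for conformal Willmore reads $2H - 4\kappa_n = 0$; combined with $H \equiv 0$ this forces $\kappa_n = h(\*T,\*T) = 0$ along $\partial\Sigma$. (Equivalently, since $F_K \neq 0$, the Remark following Theorem~\ref{thm:main1} gives that the axis vector $\*e$ is normal to $\Sigma$ on the boundary, which yields the same vanishing.) But each boundary component is a circle of latitude, hence a line of curvature whose normal curvature $h(\*T,\*T)$ equals one of the principal curvatures; on a catenoid every principal curvature is nonzero, so $\kappa_n \neq 0$ there. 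This contradiction rules out the catenoid and leaves only the plane, so $\Sigma$ is flat.

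The main obstacle is precisely this last step—upgrading ``$H\equiv 0$'' to ``flat.'' Theorem~\ref{thm:main1} by itself yields only minimality in alternative (2), and the generic minimal surface of revolution, the catenoid, is emphatically \emph{not} flat; so the theorem alone is insufficient and one must bring in the free-boundary data. I expect the crux to be reducing the free-boundary condition to the single scalar statement $\kappa_n = 0$ on $\partial\Sigma$ and then contrasting it with the everywhere-nonvanishing principal curvatures of the catenoid. A minor point to verify along the way is that the boundary circles really are lines of curvature whose normal curvature coincides with a principal curvature; this follows from rotational symmetry, as the parallels and meridians are the lines of curvature of any surface of revolution.
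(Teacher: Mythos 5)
Your proposal is correct and takes essentially the same route as the paper: apply Theorem~\ref{thm:main1} (spherical case, or $F_H=2H\equiv 0$ giving minimality), then use the boundary condition \eqref{eq:rotfree1} with $F_K$ a nonzero constant to force $\kappa_n=0$ on $\partial\Sigma$, and conclude flatness. If anything, your final step is more careful than the paper's: where the paper appeals to the ``well known fact that rotationally symmetric minimal surfaces are flat'' (false as literally stated---witness the catenoid---and supported there by a garbled computation, along with the immaterial sign slip $F_K=4$ instead of your correct $F_K=-4$), you properly exclude the catenoid by combining the classification of minimal surfaces of revolution with the fact that its principal curvatures never vanish, which is the honest way to close the argument.
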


\begin{proof}
For a conformal Willmore surface, $F=H^2-4K$ so $F_H=2H$ and $F_K=4$. Thus, by Theorem \ref{thm:main1}, either $\Sigma$ is spherical or $F_H=2H\equiv 0$. Thus, $\Sigma$ is minimal. Equation \ref{eq:rotfree1} implies $\kappa_n=0$ on $\partial \Omega$. The conclusion follows from the well known fact that rotationally symmetric minimal surfaces are flat.  Indeed, such surfaces satisfy $h(\con,\con) = -h(\*T,\*T) = c\in\mathbb{R}$, so that $K = h(\*T,\con)^2 = 0$ since $h(\*T,\con) = 0$ on any circular curve perpendicular to the axis of rotation.
\end{proof}

Indeed, as mentioned in the Introduction, the convexity assumption is actually unnecessary in this case.
\begin{theorem}\label{thm:noconvex}
Let  $\Sigma \subset \mathbb{R}^3$ be an immersed conformal Willmore surface that has free boundary with respect to $\Omega$. Suppose that $\Sigma$ and $\Omega$ share a common axis of rotational symmetry, and $\Sigma$ intersects $\Omega$ transversally. Then $\Sigma$ must be either spherical or flat. 
\end{theorem}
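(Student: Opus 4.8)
The plan is to follow the proof of the preceding Corollary almost verbatim, but to replace the appeal to Theorem~\ref{thm:main1} (which is what forced the convexity of $\Omega$) with a direct argument showing that the flux vector $\*V$ vanishes on every boundary component, using transversality alone. First I would specialize to $F = H^2 - 4K$, recording that this functional is scaling invariant, that $F_H = 2H$, and that $F_K$ is constant, so that $\nabla_\con F_K \equiv 0$. Because $\Sigma$ and $\Omega$ share the axis of rotation, each boundary component $\partial_i\Sigma$ is a circle perpendicular to that axis, hence a line of curvature with $\tau_g = h(\*T,\con) = 0$; consequently the reduced expression (\ref{eq:Vonbndry}) for $\*V$ and the full free-boundary system (\ref{eq:rotfree1})--(\ref{eq:rotfree2}) are all at our disposal.

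The crux of the argument is that, for the conformal Willmore functional, the boundary equation (\ref{eq:rotfree1}) is precisely the umbilicity condition $h(\con,\con) = \kappa_n$ already recovered from Proposition~\ref{prop:freebndry}. Substituting $\kappa_n = h(\con,\con)$ into (\ref{eq:Vonbndry}) annihilates its $\con$-component $\tfrac12\big(\kappa_n - h(\con,\con)\big)F_H$, so that along each $\partial_i\Sigma$ the flux vector collapses to a purely normal vector, $\*V = -(\nabla_\con F_H)\,\*n$ (using $\nabla_\con F_K \equiv 0$). This is exactly the step at which convexity of $\Omega$ is traded for the special structure of the conformal Willmore energy: for a generic scaling-invariant $\mathcal{W}$ the boundary condition does not force umbilicity, and the tangential part of $\*V$ need not vanish.

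With $\*V \parallel \*n$ on the boundary in hand, transversality finishes the job immediately. The free-boundary condition (\ref{eq:rotfree2}) asserts that on each component either $\*V = \*0$ or $\*V \parallel \*v$. In the latter case, combining $\*V \parallel \*n$ with $\*V \parallel \*v$ and $\*V \neq \*0$ would force $\*n \parallel \*v$, contradicting the transversal (hence nonzero-angle) intersection of $\Sigma$ and $\Omega$. Therefore $\*V \equiv \*0$ on every boundary component, so in particular on at least one of them, and Lemma~\ref{lem:Vis0} applies to yield that either $\Sigma$ is spherical or $F_H \equiv 0$ with $F_K$ constant.

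It then remains to convert these two alternatives into the stated dichotomy, exactly as in the Corollary. If $\Sigma$ is spherical, it is of the first type. If instead $F_H = 2H \equiv 0$, then $\Sigma$ is minimal, and (\ref{eq:rotfree1}) forces $\kappa_n = 0$ along $\partial\Sigma$, so the boundary circle is a parallel of vanishing normal curvature. Since the only rotationally symmetric minimal surfaces are pieces of catenoids and planes, and every parallel of a catenoid has nonzero normal curvature, $\Sigma$ must be a piece of a plane, hence flat. The main obstacle here is conceptual rather than computational: one must recognize that the umbilic boundary condition peculiar to the conformal Willmore energy kills the tangential component of $\*V$, for this is precisely what permits transversality to force $\*V = \*0$ without any use of the convexity of $\Omega$.
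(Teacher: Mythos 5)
Your proof is correct and follows essentially the same route as the paper's: the Willmore boundary condition (\ref{eq:rotfree1}) forces umbilicity $\kappa_n = h(\con,\con)$ along $\partial\Sigma$, which reduces (\ref{eq:Vonbndry}) to $\*V$ parallel to $\*n$, and transversality together with (\ref{eq:rotfree2}) then forces $\*V \equiv \*0$, so Lemma~\ref{lem:Vis0} yields the spherical/minimal dichotomy. If anything, your final step is slightly more careful than the paper's terse claim that rotationally symmetric minimal surfaces are flat, since you explicitly use $\kappa_n = 0$ on $\partial\Sigma$ (from (\ref{eq:rotfree1}) with $F_H = 2H \equiv 0$ and $F_K$ a nonzero constant) to exclude catenoid pieces, whose parallels all have nonzero normal curvature.
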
 
\begin{proof}
Since $F=H^2-4K$, by equation (\ref{eq:rotfree1}),
\begin{equation}\label{eq:umb}
    h(\*T,\*T)=h(\con, \con) \quad \text{on}\,\,\partial\Sigma.
\end{equation}
Thus, it follows from (\ref{eq:Vonbndry}) that
\[\*V|_{\partial\Sigma}=-(\nabla_\con H)\*n.\]
Since $\Sigma$ meets $\Omega$ transversally, $\*n$ is not parallel to $\*v$ on $\partial \Sigma$. Therefore, by equation (\ref{eq:rotfree2}), $\*V=\*0$ on $\partial \Sigma$. By Lemma~\ref{lem:Vis0}, $\*V=\*0$ on $\Sigma$ and  $\Sigma$ is either spherical or minimal. When $\Sigma$ is not spherical, it is  minimal and rotationally symmetric, hence must be flat.
\end{proof}


\begin{remark}
Note that the totally umbilical condition (\ref{eq:umb}) is equivalent to the condition in \cite{palmer2000} that $H = 2 \kappa_n$, since $\kappa_n = H - h(\con,\con)$ on $\partial\Sigma$.
\end{remark}

\section{Functionals Without Scaling Invariance}
Many important functionals do not share the dilation-invariance seen in the Willmore energy.  It is easy to verify that even very similar functionals such as the Helfrich-Canham energy (\ref{eq:helfrich}) do not remain static when a surface is rescaled.  Because of this, it is enlightening to also examine the properties of $\mathcal{W}$-functionals that are not scaling invariant.  In particular, flux formula (\ref{eq:scaleflux}) can be used to show the following.

\begin{lemma}\label{lem:bndryctrl}
The equation
\begin{equation*}
    \int_\Sigma \left(2F - HF_H - 2KF_K\right)d\mu = 0,
\end{equation*}
holds for any $\mathcal{W}$-critical surface immersion $\*r(\Sigma)$ provided the following expressions hold on $\partial\Sigma$:
\begin{align}
    0 &= \tau_g F_H, \label{eq:bcond1} \\
    0 &= F - h(\con,\con)F_H -KF_K, \label{eq:bcond2} \\
    0 &= h(\nabla F_K,\con) - \nabla_\con F_H - H\nabla_\con F_K. \label{eq:bcond3}
\end{align}
\end{lemma}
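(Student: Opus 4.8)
The plan is to read the conclusion directly off the scaling flux formula (\ref{eq:scaleflux}). Since $\Sigma$ is assumed $\mathcal{W}$-critical, the interior Euler--Lagrange equation is satisfied and (\ref{eq:scaleflux}) already equates the desired quantity $\int_\Sigma(2F-HF_H-2KF_K)\,d\mu$ with a pure boundary integral. Thus the entire task reduces to showing that the boundary integrand vanishes pointwise on $\partial\Sigma$ under the three hypotheses, whereupon the integral is zero and the lemma follows.

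First I would dispose of the term carrying the factor $\langle\*r,\*n\rangle$: its coefficient is exactly $h(\nabla F_K,\con)-\nabla_\con F_H-H\nabla_\con F_K$, which vanishes identically by (\ref{eq:bcond3}). What remains of the integrand is $(F_H+HF_K)\nabla_\con\langle\*r,\*n\rangle - F_K\,h(\nabla\langle\*r,\*n\rangle,\con) + F\langle\*r,\con\rangle$, and I would rewrite each curvature derivative in terms of the adapted frame $\{\*T,\*n,\con\}$. Using $\nabla_\*v\*r=\*v$ and $D_\*v\*n=-S(\*v)$, one has $\nabla_\*v\langle\*r,\*n\rangle=-h(\*r^\top,\*v)$ for any tangent $\*v$; decomposing $\*r^\top=\langle\*r,\*T\rangle\*T+\langle\*r,\con\rangle\con$ then expresses $\nabla_\con\langle\*r,\*n\rangle$ and $\nabla_\*T\langle\*r,\*n\rangle$ as explicit combinations of $\langle\*r,\*T\rangle$, $\langle\*r,\con\rangle$ and the entries $\kappa_n=h(\*T,\*T)$, $h(\con,\con)$, $\tau_g=h(\*T,\con)$.

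The one genuinely nontrivial term is $h(\nabla\langle\*r,\*n\rangle,\con)$, which naively involves $S^2$. The step that tames it is to use self-adjointness of the shape operator to write $h(\nabla\langle\*r,\*n\rangle,\con)=\langle\nabla\langle\*r,\*n\rangle,S(\con)\rangle$ and then expand $S(\con)=\tau_g\*T+h(\con,\con)\con$, so that this term collapses to $\tau_g\,\nabla_\*T\langle\*r,\*n\rangle+h(\con,\con)\,\nabla_\con\langle\*r,\*n\rangle$, reducing everything to first-order boundary data. Substituting and collecting the integrand by $\langle\*r,\*T\rangle$ and $\langle\*r,\con\rangle$, the use of $H=\kappa_n+h(\con,\con)$ turns the coefficient multiplying $\nabla_\con\langle\*r,\*n\rangle$ from $F_H+HF_K$ into $F_H+\kappa_n F_K$, which is the key simplification.

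With this grouping in hand, the $\langle\*r,\*T\rangle$-coefficient reduces to $-\tau_g F_H$, which vanishes by (\ref{eq:bcond1}). The $\langle\*r,\con\rangle$-coefficient becomes $F-h(\con,\con)F_H-\kappa_n h(\con,\con)F_K+\tau_g^2 F_K$; here I would invoke the boundary identity $K=\kappa_n h(\con,\con)-\tau_g^2$ to combine the last two terms into $-KF_K$, leaving precisely $F-h(\con,\con)F_H-KF_K$, which vanishes by (\ref{eq:bcond2}). Hence the full integrand is zero on $\partial\Sigma$ and the claim follows. I expect the main obstacle to be purely organizational: correctly reducing the $S^2$-type term to first order and tracking signs while collecting the frame coefficients, rather than any conceptual difficulty.
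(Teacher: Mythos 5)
Your proposal is correct and follows essentially the same route as the paper: both start from the scaling flux formula (\ref{eq:scaleflux}) applied to a critical surface, kill the $\langle\*r,\*n\rangle$-term via (\ref{eq:bcond3}), expand $\nabla_\con\langle\*r,\*n\rangle$ and $h(\nabla\langle\*r,\*n\rangle,\con)$ in the frame $\{\*T,\con\}$ using $S(\con)=\tau_g\*T+h(\con,\con)\con$, and match the $\langle\*r,\*T\rangle$- and $\langle\*r,\con\rangle$-coefficients with (\ref{eq:bcond1}) and (\ref{eq:bcond2}) by means of $H=\kappa_n+h(\con,\con)$ and $K=\kappa_n h(\con,\con)-\tau_g^2$. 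Your intermediate coefficients ($-\tau_g F_H$ and $F-h(\con,\con)F_H-\kappa_n h(\con,\con)F_K+\tau_g^2F_K$) agree with the paper's computation, so there is no gap.
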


\begin{proof}
First, notice that $\nabla_\con \*r \perp \*n$, so that
\begin{equation*}
    \nabla_\con\langle \*r, \*n\rangle = \langle \*r, \nabla_\con \*n\rangle = -\left\langle \*r, \tau_g \*T + h(\con,\con)\con\right\rangle.
\end{equation*}
Also, writing $\nabla \langle \*r, \*n\rangle = \nabla_\*T \langle \*r, \*n\rangle \*T + \nabla_\con \langle \*r, \*n\rangle\con$, it follows that
\begin{equation*}
    h\left(\nabla\langle \*r, \*n\rangle, \con\right) = -\left\langle \*r, H\tau_g \*T + \left( \tau_g^2 + h(\con,\con)^2\right)\con \right\rangle.
\end{equation*}
With this, the right-hand side of equation  (\ref{eq:scaleflux}) becomes
\begin{align*}
&\int_{\partial \Sigma} \Big((F_H+HF_K)\nabla_\con \left\langle{\*r, \*n}\right\rangle - F_K h(\nabla \left\langle{\*r, \*n}\right\rangle, \con)\Big)\, ds\\
&+\int_{\partial \Sigma} \left\langle{\*r, \*n}\right\rangle\Big(h(\nabla F_K, \con)-\nabla_\con F_H-H\nabla_\con F_K\Big)\, ds + \int_{\partial \Sigma} F\left\langle{\*r, \con}\right\rangle \,ds \\
&= \int_{\partial\Sigma} \left\langle \*r, -\tau_g F_H \*T + \left(F - (F_H + H F_K)h(\con,\con) + \left(\tau_g^2 + h(\con,\con)^2\right)F_K\right)\con \right\rangle\, ds \\
&+ \int_{\partial\Sigma} \left\langle \*r, \left(h(\nabla F_K, \con)-\nabla_\con F_H-H\nabla_\con F_K\right)\*n \right\rangle \, ds.
\end{align*}
The inner product $\langle \*r,\*V\rangle$ inside the above integral expression vanishes for any immersion $\*r$ when each component of the vector field $\*V$ vanishes identically.  This combined with the fact that $H = h(\*T,\*T) + h(\con,\con)$ on $\partial\Sigma$ now yields the claimed boundary conditions.
\end{proof}

\begin{remark}
For immersions that meet a planar boundary tangentially, $\kappa_n = \tau_g = 0$ and the flux formula reduces to 
\[ \int_\Sigma \left(2F - HF_H - 2KF_K\right) d\mu = \int_{\partial\Sigma} (F-HF_H) \langle \*r,\con\rangle\, ds. \]  In the case where $F$ is the (scaling invariant) Willmore functional, this implies the result of Dall'Acqua \cite{dall2012} that $h(\*X,\*Y) = 0$ for all vector fields $\*X,\*Y$ tangent to $\Sigma$ at the boundary.  More precisely, $\langle \*r, \con \rangle$ is of constant sign on $\partial\Sigma$, so it follows that $H \equiv 0$ there.  Since $\kappa_n$ is zero also, this implies that $h(\con,\con) = 0$, so both principal curvatures must be zero on $\partial\Sigma$. 
\end{remark}

It is now appropriate to give the proof of Theorem~\ref{thm:main2}, which details a situation where conditions on the boundary of a $\mathcal{W}$-critical surface can exert control over the interior.

\begin{proof}[Proof of Theorem~\ref{thm:main2}]
It follows from the hypotheses and Lemma~\ref{lem:bndryctrl} that any critical surface $\Sigma$ must satisfy the integral equality \[\int_\Sigma \left(2F - HF_H - 2KF_K\right) d\mu = 0.\]  However, $\mathcal{W}$ is assumed to be shrinking or expanding, so the integrand is vanishing. 
\end{proof}

\subsection{Corollaries}
With Theorem~\ref{thm:main2} now established, some interesting corollaries can be extracted.  First, consider the case where $\Sigma$ has an axis of rotational symmetry.
\begin{corollary}\label{cor:main2}
Let $\mathcal{W}$ be expanding or shrinking, and let $\Sigma$ be a rotationally symmetric $\mathcal{W}$-surface with boundary. Suppose additionally that the following hold:
\begin{enumerate}
\item $F-h(\con, \con)F_H-KF_K=0$ on $\partial\Sigma$,
\item $\nabla_\eta F_H + \kappa_n \nabla_\eta F_K = 0$ on $\partial\Sigma$.
\end{enumerate}
Then, either $\Sigma$ is spherical or there is a constant $c$ such that $F_H\equiv 0$, $F_K\equiv c$, and $F\equiv cK$ on $\Sigma$.
\end{corollary}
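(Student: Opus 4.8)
The plan is to reduce this corollary to the two workhorses already in hand: Theorem~\ref{thm:main2}, which converts boundary data into the interior identity $2F-HF_H-2KF_K\equiv0$, and Lemma~\ref{lem:Vis0}, whose trichotomy is exactly the desired conclusion. So the strategy is a two-stage one: first check that the hypotheses here are precisely the rotationally-symmetric incarnation of the boundary conditions (\ref{eq:bcond1})--(\ref{eq:bcond3}); then show the scaling identity produced by Theorem~\ref{thm:main2} forces $\*V=\*0$ on $\partial\Sigma$, so that Lemma~\ref{lem:Vis0} closes the argument.

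For the first stage I would exploit the boundary geometry recorded above Lemma~\ref{lem:Vis0}: because $\partial\Sigma$ is a line of curvature one has $\tau_g=h(\*T,\con)=0$ and $h(\nabla f,\con)=h(\con,\con)\nabla_\con f$ for every smooth $f$. The first of these makes (\ref{eq:bcond1}) automatic, since $\tau_g F_H=0$; condition (\ref{eq:bcond2}) is verbatim hypothesis (1); and for (\ref{eq:bcond3}) I substitute $h(\nabla F_K,\con)=h(\con,\con)\nabla_\con F_K$ together with $H-h(\con,\con)=\kappa_n$, which collapses the left-hand side to $-\big(\nabla_\con F_H+\kappa_n\nabla_\con F_K\big)$, vanishing by hypothesis (2). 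Thus all three conditions hold, and Theorem~\ref{thm:main2} yields $2F-HF_H-2KF_K\equiv0$ on $\Sigma$.

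For the second stage I would use this newly available scaling identity, valid now pointwise on $\Sigma$ and hence on $\partial\Sigma$. There $K=\kappa_n h(\con,\con)$ and $H=\kappa_n+h(\con,\con)$, so the identity reads $F=\tfrac12 HF_H+KF_K=\tfrac12(\kappa_n+h(\con,\con))F_H+\kappa_n h(\con,\con)F_K$. Subtracting hypothesis (1), rewritten as $F=h(\con,\con)F_H+\kappa_n h(\con,\con)F_K$, leaves $(\kappa_n-h(\con,\con))F_H=0$ on $\partial\Sigma$. By the reduced expression (\ref{eq:Vonbndry}) — which is legitimate precisely because the scaling identity now holds — this is exactly the vanishing of the $\con$-component of $\*V$, while hypothesis (2) kills the $\*n$-component. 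Hence $\*V=\*0$ on $\partial\Sigma$, and the equivalence $(2)\Leftrightarrow(3)$ of Lemma~\ref{lem:Vis0} delivers the dichotomy: either $\Sigma$ is spherical, or $F_H\equiv0$, $F_K\equiv c$, and $F\equiv cK$ on $\Sigma$.

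The one point demanding care — and the step I expect to be the main obstacle — is that Lemma~\ref{lem:Vis0} is stated under \emph{global} scaling invariance, whereas Theorem~\ref{thm:main2} supplies only the vanishing of the scaling excess pointwise on $\Sigma$. I would therefore note explicitly that the proof of Lemma~\ref{lem:Vis0} never uses global invariance: it enters solely through the flux identities (\ref{eq:rotsymflux1})--(\ref{eq:rotsymflux2}) (whose left-hand integrands are $2F-HF_H-2KF_K$) and through the reduced form (\ref{eq:Vonbndry}), both of which depend only on the on-surface identity we have established. The remaining ingredients — the exhaustion argument propagating $\*V\equiv\*0$ into the interior and the umbilic-versus-flat case split — are purely geometric and local to $\Sigma$, so the lemma applies unchanged.
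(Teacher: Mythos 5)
Your proof is correct and follows essentially the same route as the paper, which likewise observes that rotational symmetry makes $\partial\Sigma$ a line of curvature (so $\tau_g\equiv 0$ yields (\ref{eq:bcond1}), while your hypotheses supply (\ref{eq:bcond2})--(\ref{eq:bcond3})) and then concludes via Theorem~\ref{thm:main2} followed by Lemma~\ref{lem:Vis0}. Your closing point of care --- that Lemma~\ref{lem:Vis0} is stated under scaling invariance but its proof only ever uses the pointwise identity $2F-HF_H-2KF_K\equiv 0$, which Theorem~\ref{thm:main2} provides, entering through (\ref{eq:rotsymflux1})--(\ref{eq:rotsymflux2}) and (\ref{eq:Vonbndry}) --- is precisely the justification the paper leaves implicit, and you handle it correctly.
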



\begin{proof}
Since $\Sigma$ is rotationally symmetric by assumption, its boundaries are lines of curvature.  As such, $\tau_g \equiv 0$ on $\partial\Sigma$, hence (\ref{eq:bcond1}) is satisfied. The result then follows from Theorem \ref{thm:main2} and Lemma \ref{lem:Vis0}. 
\end{proof}
\begin{remark}\label{rem:counterex}
	For example, the first case occurs when $F=(H^2-4K)^2$, while the second case happens if $F=H^4+K$ and $\Sigma$ is minimal.  
\end{remark}

Moreover, it is worthwhile to consider functionals which are independent of the Gauss curvature $K$, as many of these objects appear quite naturally in practice, e.g. the surface area, total mean curvature, and (non-conformal) Willmore functionals.  To that end, there is the following Corollary which details the case where $F = F(H)$ is a real analytic function of $H$ alone. 
\begin{corollary}\label{cor:minimal}
Let $\mathcal{W}$ be expanding or shrinking, $\Sigma$ be a $\mathcal{W}$-critical surface, and $F = F(H)$ be a real analytic function of $H$ alone.  Suppose $F = F_H = \nabla_\con F_H = 0$ on $\partial\Sigma$.  Then, one of the following holds:
\begin{enumerate}
    \item $F \equiv 0$ everywhere on $\Sigma$,
    \item $F \equiv cH^2$ for some $c\in\mathbb{R}$ and $\mathcal{W}$ is scaling invariant,
    \item $\Sigma$ has constant mean curvature and $F = 0$ on $\Sigma$.
\end{enumerate}
\end{corollary}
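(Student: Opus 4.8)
The plan is to derive the corollary as a direct application of Theorem~\ref{thm:main2} followed by an exploitation of real analyticity. First I would note that since $F = F(H)$ is independent of $K$, we have $F_K \equiv 0$ on all of $\Sigma$, and hence also $\nabla F_K \equiv 0$ and $\nabla_\con F_K \equiv 0$. Substituting this into the three boundary conditions of Theorem~\ref{thm:main2}, they collapse to $\tau_g F_H = 0$, $\;F - h(\con,\con)F_H = 0$, and $\;-\nabla_\con F_H = 0$ on $\partial\Sigma$, each of which is immediate from the hypotheses $F = F_H = \nabla_\con F_H = 0$ on $\partial\Sigma$. Since $\mathcal{W}$ is assumed expanding or shrinking, Theorem~\ref{thm:main2} then applies verbatim and yields
\[ 2F - H F_H \equiv 0 \quad \text{on } \Sigma, \]
where $F_K \equiv 0$ has again been used to drop the $2KF_K$ term.

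Next I would reinterpret this as a statement about the one-variable function $F$. Writing $F_H = F'(H)$, the identity above says $2F(H(p)) = H(p)\,F'(H(p))$ for every $p\in\Sigma$; equivalently, the real-analytic function $G(s) := 2F(s) - s\,F'(s)$ vanishes at every value $s$ attained by the mean curvature $H$ on $\Sigma$. Assuming $\Sigma$ connected (otherwise one argues componentwise), continuity makes the image of $H$ an interval, and the argument bifurcates according to whether $H$ is constant.

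If $H$ is non-constant, its image is a nondegenerate interval, so $G$ vanishes on a set with nonempty interior; by the identity theorem for real-analytic functions, $G \equiv 0$ throughout the domain of $F$. Solving the linear ODE $s\,F'(s) = 2F(s)$ and invoking analyticity forces $F(s) = c\,s^2$ for a single constant $c\in\mathbb{R}$ (the power-series coefficients must satisfy $(n-2)a_n = 0$, killing every term but $n=2$). If $c = 0$ this is conclusion (1); if $c\neq 0$, then $F(tH,t^2K) = t^2 F(H,K)$, so $\mathcal{W}$ is scaling invariant, giving conclusion (2) (and the boundary hypothesis $F=0$ merely forces $H=0$ on $\partial\Sigma$, consistent with this case). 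If instead $H \equiv H_0$ is constant, then $\Sigma$ has constant mean curvature, and evaluating $F = 0$ on $\partial\Sigma$ gives $F(H_0) = 0$, whence $F \equiv F(H_0) = 0$ on all of $\Sigma$, which is conclusion (3).

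The routine part is the verification that the boundary conditions of Theorem~\ref{thm:main2} degenerate as claimed once $F_K\equiv 0$. I expect the crux to be the passage from the pointwise relation $2F(H) = HF'(H)$, which a priori constrains $F$ only on the range of $H$, to a genuine functional identity for $F$ valid on its whole domain: this is precisely where real analyticity is indispensable, since for a merely smooth $F$ one could not preclude $H$ from avoiding the locus where $G \neq 0$. The clean trichotomy then follows from the elementary ODE solution together with the boundary normalization $F = 0$.
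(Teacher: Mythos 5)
Your proposal is correct and follows essentially the same route as the paper: verify the boundary system of Lemma~\ref{lem:bndryctrl} (which is the content of Theorem~\ref{thm:main2}), use the shrinking/expanding sign condition to upgrade the integral identity to the pointwise relation $2F - HF_H = 0$, and then split on whether $H$ is constant, invoking analyticity in the non-constant case. Your ODE/power-series derivation of $F = cs^2$ and the identity-theorem step merely make explicit what the paper's proof states tersely (``either $F \equiv cH^2$ for some $c$ or $F \equiv 0$ on $\Sigma$ by analyticity''), so there is no substantive difference in method.
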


\begin{proof}
    Notice that the system in Lemma~\ref{lem:bndryctrl} is satisfied under these assumptions.  Hence, it must follow that 
    \[\int_\Sigma \left(2F-HF_H\right)d\mu = 0.\]
    Since $\mathcal{W}$ is either shrinking or expanding, this implies that $2F-HF_H = 0$ pointwise on $\Sigma$.  If $H$ is not constant on $\Sigma$, then by continuity this equation is satisfied for an open set in $H$, so either $F \equiv cH^2$ for some $c$ or $F \equiv 0$ on $\Sigma$ by analyticity.  Otherwise, $\Sigma$ has constant mean curvature, and $F = 0$ on $\partial\Sigma$ implies that $F \equiv 0$ on $\Sigma$.
\end{proof}

\begin{remark}
If $F$ is assumed to be smooth instead of analytic, the conclusions of Corollary~\ref{cor:minimal} remain true only in a local sense.  That is, either $F = cH^2$ or $F = 0$ pointwise on $\Sigma$, but this need not extend to the whole domain of $\mathcal{W}$.  To see this, take for example $F(H) = \varphi(H)\, H^2$, where $\varphi(H)$ is a smooth bump function which is identically 1 on $[-1,1]$ and supported on $[-\sqrt{2}, \sqrt{2}]$ (see \cite[Chapter 13]{tu2010} for a construction).  In this case, one can verify that $2F - HF_H \geq 0$ everywhere, so $\mathcal{W}$ is expanding, and also that all derivatives of $\varphi$ vanish on $[-1,1]$.  This means that the Clifford torus $\Sigma \subset \mathbb{R}^3$ which has been rescaled so that its mean curvature lies in $[-1,1]$ is critical for $\mathcal{W}$, and (vacuously) satisfies the boundary conditions in Corollary~\ref{cor:minimal}.  However, $\mathcal{W}$ is certainly not scaling invariant nor identically zero on its domain.  
\end{remark}

This particular Corollary can be used to show that, in some cases, minimizers of a $\mathcal{W}$-functional can only be minimal surfaces.  In particular, for a surface with boundary $\Sigma \subset \mathbb{R}^3$ there is the 
notion of p-Willmore energy mentioned in the Introduction,
\begin{equation*}
    \mathcal{W}^p(\*r) = \int_{\Sigma} |H|^p\, d \mu \qquad p\in\mathbb{R}.
\end{equation*}
Clearly, this coincides up to a constant factor with the usual, scaling invariant, definition of the (non-conformal) Willmore energy when $p=2$.  On the other hand, this functional is not scaling invariant for $p\neq 2$, since
\begin{equation*}
    2F - HF_H - 2KF_K = 2|H|^p - H\, \partial_H\left((H^2)^{p/2}\right) = (2-p)|H|^p \neq 0.
\end{equation*}
As seen before, this lack of scaling invariance has significant consequences on the critical surfaces of $\mathcal{W}^p$.  In particular, we observe that conditions on the boundary of a p-Willmore surface when $p>2$ exert much more control over what happens in the interior when compared to the case $p=2$.  To illustrate this, first note that the flux formula (\ref{eq:scaleflux}) reduces immediately to
\begin{equation}\label{eq:pwillflux}
\begin{split}
    &(2-p)\int_\Sigma |H|^p\, d\mu \\
    &= \int_{\partial\Sigma} \Big(|H|^p\langle \*r,\con\rangle + p|H|^{p-2}\left(H\nabla_\con\langle \*r,\*n\rangle - (p-1)\langle \*r,\*n\rangle \nabla_\con H \right)\Big)\, ds.
\end{split}
\end{equation}

Corollary~\ref{cor:minimal} can now be applied to establish the statement of Theorem~\ref{thm:pWillmore}.

\begin{proof}[Proof of Theorem~\ref{thm:pWillmore}]
By Corollary~\ref{cor:minimal}, it is sufficient to consider 
\[|H|^p = p|H|^{p-2}H = p(p-1)H^{p-2}\nabla_\con H = 0 \quad\text{on}\,\,\partial\Sigma,\]
which is clearly satisfied under the hypothesis that $p>2$ and $H = 0$ on the boundary.  Since $|H|^p$ is not scaling invariant for $p\neq 2$, it follows that $|H|^p \equiv 0$ on $\Sigma$.  Hence, $H\equiv 0$ and $\Sigma$ must be minimal.
\end{proof}

\begin{remark}
This result can also be deduced directly from (\ref{eq:pwillflux}) without appealing to Corollary~\ref{cor:minimal}, as the flux formula (\ref{eq:pwillflux}) reduces to
    \begin{equation*}
        \int_\Sigma |H|^p = 0,
    \end{equation*}
implying that $|H|\equiv 0$ on $\Sigma$ by continuity.
\end{remark}


Clearly this is quite different from the Willmore case of $p=2$, where there are many known and non-minimal solutions to the same boundary-value problem (e.g. \cite{deckelnick2009}).  It is likely true that other $\mathcal{W}$-functionals which lack scale-invariance are similarly influenced by their boundary data, but this is a study for future work.  It is hoped that the results and Corollaries developed here will be of use in answering such questions.

\bibliographystyle{abbrv}
\bibliography{Wsurfacebib}

\end{document}